\theoremstyle{plain}
\newtheorem{theorem}{Theorem}
\newtheorem{lemma}[theorem]{Lemma}
\theoremstyle{definition}
\newtheorem{assumption}{Assumption}
\theoremstyle{remark}
\newtheorem{remark}[theorem]{Remark}
\newcommand{\E}{\mathbb{E}}
\newcommand{\R}{\mathbb{R}}
\newcommand{\Prob}{\mathbb{P}}
\title[A.s. convergence of stochastic Hamiltonian descent methods]{Almost sure convergence of stochastic Hamiltonian descent methods}
\author[M. Williamson]{M\r{a}ns Williamson}
\address{Centre for Mathematical Sciences, Lund University, P.O.\ Box 118, 221 00 Lund, Sweden}
\email{mans.williamson@math.lth.se}
\author[T. Stillfjord]{Tony Stillfjord}
\address{Centre for Mathematical Sciences, Lund University, P.O.\ Box 118, 221 00 Lund, Sweden}
\email{tony.stillfjord@math.lth.se}
\thanks{This work was partially supported by the Wallenberg AI, Autono\-mous Systems and Software Program (WASP) funded by the Knut and Alice Wallenberg Foundation. The authors declare no competing interests.
}
\begin{document}

\begin{abstract}
Gradient normalization and soft clipping are two popular techniques
for tackling instability issues and improving convergence of
stochastic gradient descent (SGD) with momentum.  In this article, we
study these types of methods through the lens of dissipative
Hamiltonian systems. Gradient normalization and certain types of soft
clipping algorithms can be seen as (stochastic) implicit-explicit
Euler discretizations of dissipative Hamiltonian systems, where the
kinetic energy function determines the type of clipping that is
applied.  We make use of dynamical systems theory to show in a unified
way that all of these schemes converge to stationary points of the
objective function, almost surely, in several different settings: a)
for $L$-smooth objective functions, when the variance of the
stochastic gradients is possibly infinite, b) under the
$(L_0,L_1)$-smoothness assumption, for heavy-tailed noise with bounded
variance, and c) for $(L_0,L_1)$-smooth functions in the empirical risk
minimization setting, when the variance is possibly infinite but the
expectation is finite.
\end{abstract}

\maketitle

\section{Introduction}\label{sec:introduction}
In this article we consider the unconstrained optimization problem
\begin{align}\label{problem:objective_function}
\min_{q \in \mathbb{R}^d} F(q),
\end{align}
where $F: \mathbb{R}^d \to \mathbb{R}$ is an objective function. A common case in mathematical statistics and machine learning is the empirical risk minimization setting, where $F$ is a weighted sum of loss functions:
\begin{align}\label{eq:F_sum}
F(q) = \frac{1}{N} \sum_{i=1}^N  f_i(q),
\end{align}
with $f_i(q) = \ell(h(x_i,q),y_i)$.
Here, $\{(x_i,y_i)\}_{i=1}^N$ is an underlying data set of feature-label pairs in the feature-label space $\mathcal{X}\times \mathcal{Y}$, $h(q,\cdot)$ is a model with model parameters $q$ such as a neural network or a regression function, and $\ell$ is a loss function.
A common approach within the machine learning community for solving problems of the type given by (\ref{problem:objective_function}) 
is to employ \emph{stochastic gradient descent} (SGD) \citep{robbins_monro_1951}. A solution $q_*$ to (\ref{problem:objective_function}) is approximated iteratively by $q_k$, $k = 1, 2, \ldots$, via
\begin{align}\label{eq:sgd}
q_{k+1} = q_k - \alpha_k \nabla f(q_k,\xi_k),
\end{align}
where $\alpha_k $ is the learning rate, $\nabla f(q_k,\xi_k)$ is a stochastic approximation of $\nabla F(q_k)$, and $\xi_k$ is a random variable that accounts for the stochasticity.
A common choice is to take a random subset $B_{\xi_k} \subset \{1, \dots, N\}$ of the indices of the objective function defined by (\ref{eq:F_sum}) and choose
\begin{align}\label{eq:nablaf_batch}
\nabla f(q,\xi_k) = \frac{1}{|B_{\xi_k}|} \sum_{i \in B_{\xi_k}} \nabla f_i(q),
\end{align}
where $|B_{\xi_k}|$ denotes the cardinality of $B_{\xi_k}$. This is attractive when $N$ is very large and $|B_{\xi}|\ll N$, as it is less computationally expensive than gradient descent. The method also tends to escape local saddle points \citep{fang_et_al_2019} - an appealing property as many machine learning problems are non-convex.
Among the variations of SGD is the popular \emph{SGD with momentum}. Its deterministic counterpart was first introduced in the seminal work of \citet{polyak_1964}. A common form of this algorithm is expressed as an update in two stages
\begin{align}\label{eq:sgd_with_momentum}
\begin{split}
p_{k+1} &= \beta_k p_k - \alpha_k \nabla f(q_k,\xi_k) \\
q_{k+1} &= q_k + \alpha_k p_{k+1}
\end{split}
\end{align}
where $p_0 = 0$ and $\beta_k > 0$ is a momentum parameter.
The usage of the momentum update makes the algorithm less sensitive to noise. Indeed, by an iterative argument, we obtain that
$p_{k+1} = - \sum_{i=0}^k \left( \prod_{j=i+1}^k \beta_j \right) \alpha_i \nabla f(q_i,\xi_i).
$
 That is, $p_{k+1}$ is an average of the previous gradients, where $\beta_k$ determines how much we value information from the preceding stages.

Notwithstanding the benefits of stochastic gradient algorithms, they frequently suffer from instability problems such as exploding gradients \citep{pascanu_2013, bengio_1994} and sensitivity to the choice of learning rate \citep{owens_filkin_1989}.
A way to mitigate these issues is to employ gradient clipping \citep{Goodfellow-et-al-2016,pascanu_2013}
or gradient normalization. 
Gradient normalization was introduced in \citet{poljak_1967} in the deterministic setting and a stochastic version appears already in \citet{andradottir_1990}. A normalized version of the algorithm determined by (\ref{eq:sgd}) is given by
\begin{align*}
q_{k+1} = q_k - \alpha_k \frac{\nabla f(q_k,\xi_k)}{\lVert \nabla f(q_k,\xi_k) \rVert_2}.
\end{align*}
In practice a small number $\epsilon >0$ is added in the denominator to ensure that the update does not become infinitely large.

Gradient clipping was first introduced in \citet{mikolov_2013}. In so-called \emph{hard clipping}, the gradient is simply rescaled if it is larger than some predetermined threshold. \emph{Soft clipping}, on the other hand, makes use of a differentiable function for rescaling the gradient \citep{zhang_2020}. It was recently shown that hard clipping algorithms suffer from an \emph{unavoidable bias term}  
\citep{koloskova_et_al_2023}; a term in the convergence bound that does not decrease as the number of iterations increases. This is one reason why soft clipping is preferable.

\subsection{Gradient normalization, momentum and Hamiltonian systems}
In this article, we study gradient normalization and soft clipping of stochastic momentum algorithms from the perspective of \emph{Hamiltonian systems}. As a first step, we note that if we take $\beta_k = 1 - \gamma \alpha_k$ with $\gamma >0$, we can view the scheme given by~(\ref{eq:sgd_with_momentum}) as an approximate implicit-explicit Euler discretization of the equation system
\begin{align}\label{eq:sgd_mometum_ode}
\begin{split}
\dot{p} &= - \nabla F(q) - \gamma p,\\
\dot{q} &= p.
\end{split}
\end{align}
The system (\ref{eq:sgd_mometum_ode}) is \emph{nearly Hamiltonian} \citep{glendinning_1994}; taking 
\begin{align}\label{eq:separable_Hamiltonian}
H(p,q) = F(q) + \varphi( p),
\end{align}
with $\varphi(p)= \frac{1}{2}\lVert p \rVert_2^2$, we can write it on the form
\begin{align}\label{eq:nearly_Hamiltonian_system}
\begin{split}
\dot{p} &= - \nabla_q H(p,q) -  \nabla_{\dot{q}} \mathcal{R}(\dot{q}),\\
\dot{q} &= \nabla_p H(p,q).
\end{split}
\end{align}
Here, $\nabla_p,\nabla_q$ denote the gradients with respect to $p$ and $q$ respectively and $\mathcal{R}(\dot{q}) = \gamma \frac{\lVert \dot{q}\rVert_2^2}{2}$ is a \emph{Rayleigh dissipation function} that  accounts for energy dissipation (viscous friction) of the system. Note that this choice of $\mathcal{R}$ yields $\nabla_{\dot{q}} \mathcal{R}(\dot{q}) = \gamma \nabla_p H(p,q)$, which will always be the case in this paper. Thus, for a Hamiltonian of the form (\ref{eq:separable_Hamiltonian}), (\ref{eq:nearly_Hamiltonian_system}) reads
\begin{align*}
\begin{split}
\dot{p} &= - \nabla F(q) -  \gamma \nabla \varphi(p),\\
\dot{q} &= \nabla \varphi(p).
\end{split}
\end{align*}
We notice that any \emph{fixed point} of this system is a stationary point of $F$, since $(\dot{q},\dot{p}) = 0$ implies that $\nabla F(q)=0$.
The dissipation term is often included as an extra term in the Euler-Lagrange equations
\begin{align*}
\nabla_{\dot{q}} L(q,\dot{q}) - \frac{\mathrm{d}}{\mathrm{d}t} L(q,\dot{q}) = \nabla_{\dot{q}} \mathcal{R}(\dot{q}),
\end{align*}
where $L(q,\dot{q}) = \varphi^*(\dot{q}) - F(q)$ is the \emph{Lagrangian}, and $\varphi^*$ is the convex conjugate of $\varphi$, compare Proposition~51.2 and Ex.~51.3 in \citet{zeidler3_1985}. The physical interpretation is that 
$q$ is the position of a particle in a potential field $F(q)$ with kinetic energy given by $\varphi^*(\dot{q})$ (in the case when $\varphi(p) = \frac{\lVert p \rVert_2^2}{2}$ we have $\varphi^*(\dot{q}) = \frac{\lVert \dot{q}\rVert_2^2}{2}$).
In many scenarios, such as in this case,  it happens that the friction term is proportional to the velocity \citep{goldstein_mechanics}. A ball rolling on a rough incline \citep{wolf_et_al_1998, bideau_et_al_1994}) or on a tilted plane coated with a viscous fluid \citep{bico_et_al_2009} could for instance be modelled in this fashion, giving weight to the analogy of the \emph{heavy ball} \citep{polyak_1964}. See also \citet{Goodfellow-et-al-2016}, for a further discussion on this.

In this paper, we consider generalizations of the algorithm defined by (\ref{eq:sgd_with_momentum}) to equations of the type (\ref{eq:nearly_Hamiltonian_system}), where
$\varphi$ is a convex, coercive and $L$-smooth function. The family of schemes we consider are given by
\begin{align}\label{eq:stochastic_Hamiltonian_descent}
\begin{split}
p_{k+1} &= p_k - \alpha_k \nabla f(q_k,\xi_k) - \alpha_k \gamma \nabla \varphi(p_k),\\
q_{k+1} &= q_k + \alpha_k \nabla \varphi(p_{k+1}),
\end{split}
\end{align}
where $p_0=0$, $q_0$ is arbitrary, and $\{\xi_k\}_{k\geq 0}$ is a sequence of independent, identically distributed random variables. We show that these schemes converge almost surely to the set of stationary points of $F$.
If we take $\varphi(x) = \frac{\lVert x \rVert_2^2}{2}$ in (\ref{eq:stochastic_Hamiltonian_descent}), we get SGD with momentum~(\ref{eq:sgd_with_momentum}). Taking $\varphi(x) = \sqrt{\lVert x \rVert_2^2 + \epsilon}, \ \epsilon >0$, gives us a gradient normalization scheme, where both the gradient and the momentum variables are rescaled:
\begin{align}\label{eq:normalized_sgd_with_momentum}
\begin{split}
p_{k+1} &= p_k - \alpha_k \nabla f(q_k,\xi_k) - \alpha_k \gamma \frac{ p_k }{\sqrt{\lVert  p_k \rVert_2^2 + \epsilon}},\\
q_{k+1} &= q_k + \alpha_k \frac{ p_{k+1} }{\sqrt{\lVert  p_{k+1} \rVert_2^2 + \epsilon}}.
\end{split}
\end{align}
Other conceivable choices are
\begin{enumerate}[label=\roman*),start=1]
\item \emph{Relativistic kinetic energy:}
 $\varphi(x) = c \sqrt{\lVert x \rVert_2^2 + (mc)^2}$. \citep{franca_et_al_2020} \label{ex:relativistic}
\item \emph{Non-relativistic kinetic energy:} $\varphi(x) = \frac{1}{2}\langle x, A x \rangle + \langle b,x \rangle + c$, where $A$ is a positive definite, symmetric matrix, $b \in \R^d$ and $c\in \R$. \citep{goldstein_mechanics} \label{ex:nonrelativistic}
\item \emph{Gradient rescaling:}
 $\varphi(x) = c \sqrt{\lVert x \rVert_2^2 + \epsilon}$, for $c,\epsilon >0$. 
 \label{ex:rescaling}
 \item \emph{Soft clipping:}
 $\varphi(x) =  \sqrt{1 + \lVert x \rVert_2^2}$.  \label{ex:softclipping}
\item \emph{The symmetric LogSumExp-function:}
 $\varphi(x) = \ln \left( \sum_{i=1}^d e^{x_i} + e^{-x_i} \right)$, which can be seen as an approximation of the $\ell^{\infty}$-norm \citep{sherman_2013}. \label{ex:logsumexp}
\item \emph{Half-squared $\ell^p$-norm:} $\varphi(x) = \frac{1}{2} \lVert x \rVert_p^2$, for $p \in [2,\infty)$.  \label{ex:lpnorm}
\end{enumerate}
Examples \ref{ex:relativistic}, \ref{ex:rescaling} and \ref{ex:softclipping} are analytically similar, but give rise to different behaviours in the algorithm given by (\ref{eq:stochastic_Hamiltonian_descent}).
We refer the reader to \citet{beck_2017,peressini1993}, for verifying that the functions above satisfy the assumptions in Section~\ref{sec:setting}.

\section{Contributions}
Making use of Hamiltonian dynamics, we consider a large class of stochastic optimization algorithms~(\ref{eq:stochastic_Hamiltonian_descent}) for large-scale optimization problems, for which we perform a rigorous convergence analysis. Our assumptions on the dissipation term $\varphi$ are fairly permissive, and thus the class of algorithms covers both interesting cases like normalized SGD with momentum and various soft-clipping methods with momentum, as well as novel methods. Our analysis shows that the iterates generated by any method in this class are finite almost surely, and that they converge almost surely to the set of stationary points of the objective function $F$. This means that the methods ``always'' work in practice, in contrast to what can be guaranteed by analyses that show convergence in expectation. These results are valid in many applications, due to fairly weak assumptions on the optimization problem. The exact assumptions are listed in Section~\ref{sec:analysis} but essentially consist of either
\begin{itemize}
\item  $L$-smooth objective functions and stochastic gradients with possibly infinite variance, or
\item $(L_0,L_1)$-smooth objective functions and heavy-tailed stochastic gradients with bounded variance, or
\item $(L_0,L_1)$-smooth objective functions arising in the empirical risk minimization setting and stochastic gradients with possibly infinite variance but bounded expectation.
\end{itemize}
In particular, we do not assume convexity of the objective function $F$ in any of the cases.

\section{Outline}
In Section~\ref{sec:related}, we briefly discuss some results that are related to the analysis in this paper. The main results and analysis is presented in Section~\ref{sec:analysis}, with conclusions in Section~\ref{sec:conclusions}. The details of the analysis can be found in Appendix~\ref{appendix:analysis}. This depends on some auxiliary results listed in Appendices~\ref{appendix:auxiliary}, \ref{appendix:existence} and \ref{appendix:arzela_ascoli}.

\section{Related works}\label{sec:related}
In the first subsection we consider other formulations of SGD with momentum and how the formulation in this 
paper relates to them. 
In the second subsection we summarize work in optimization and statistics which make use of Hamiltonian dynamics.
Next, we discuss the approach we use for showing almost sure convergence of the methods. Finally, we discuss the central $(L_0,L_1)-$smoothness condition on the objective function.

\subsection{Momentum algorithms}
The implementations of SGD with momentum in the libraries Tensorflow \citep{TensorFlow} and Pytorch \citep{PyTorch} given by
\begin{align*}
\begin{split}
p_{k+1} &= \beta_k p_k - \alpha_k \nabla f(q_k,\xi_k) \\
q_{k+1} &= q_k +  p_{k+1}
\end{split}
\end{align*}
are equivalent to (\ref{eq:sgd_with_momentum}) after the transformation $\alpha_k \to \sqrt{\alpha_k}$, $\beta_k \to \beta_k \sqrt{\frac{\alpha_{k+1}}{\alpha_{k}}}$.
The method (\ref{eq:sgd_with_momentum}) also resembles the (hard-clipped) scheme proposed in \citet{mai_johansson_2021}:
\begin{align*}
\begin{split}
p_{k+1} &= \text{clip}_{r} \left( (1- \beta_k)p_k - \beta_k \nabla f(q_{k},\xi_k) \right),\\
q_{k+1} &= q_k  +\alpha_k p_{k+1},
\end{split}
\end{align*}
where $\text{clip}_{r}$ is a projection operator that projects the argument onto a ball of radius $r$ at the origin.
Further, (\ref{eq:sgd_with_momentum}) is reminiscent of
Stochastic Primal Averaging (SPA) \citep{defazio2021}:
\begin{align*}
p_{k+1} &= p_k - \eta_k \nabla f(q_k,\xi_k),\\
q_{k+1} &= (1- c_{k+1})q_k + c_{k+1} p_{k+1}.
\end{align*}
In Theorem 1~in~\citet{defazio2021} it is shown that this is equivalent to the SGD with momentum version
\begin{align*}
p_{k+1} &= \beta_k p_k + \nabla f(q_k,\xi_k),\\
q_{k+1} &= q_k - \alpha_k p_{k+1},
\end{align*}
if one takes  $\eta_{k+1} = \frac{\eta_k - \alpha_k}{\beta_{k+1}}$ 
and $c_{k+1} = \frac{\alpha_k}{\eta_k}$.
The SPA algorithm can be seen as a randomized implicit-explicit Euler discretization of the equation system
\begin{align*}
\dot{p} &=  -\nabla F(q),\\
\dot{q} &=  p-q,
\end{align*}
which after a change of variable is equivalent to (\ref{eq:sgd_mometum_ode}) for $\gamma=1$.
Under the rather strong assumption that the noise is almost surely bounded (which does not hold for, e.g., Gaussian noise), so-called mixed-clipped SGD with momentum was studied in \citet{zhang_2020}:
\begin{align*}
p_{k+1} &= \beta p_k - (1- \beta) \nabla f(q_k,\xi_k),\\
q_{k+1} &= q_k - 
\nu \min \left( \eta, \frac{\gamma}{\lVert p_{k+1} \rVert_2} \right)p_{k+1} \\
 &\qquad\;+
 (1-\nu) \min \left( \eta, \frac{\gamma}{\lVert \nabla f(q_k,\xi_k)  \rVert_2} 
 \right)
 \nabla f(q_k,\xi_k)
,
\end{align*}
Here, $0 \leq \nu \leq 1$ is an interpolation parameter.

A drawback with the previously mentioned analyses is that the convergence results are obtained in expectation, which means that there is no guarantee that a single path will converge.

\subsection{Hamiltonian dynamics}
Hamiltonian dynamics, in its energy conserving form, has been well-explored in the Markov chain Monte Carlo field, compare \cite{leimkuhler_2015}. In \citet{livingstone_2017}, various kinetic energy functions $\varphi$ are considered for equation (\ref{eq:nearly_Hamiltonian_system}) without the dissipation term $\nabla_{\dot{q}} \mathcal{R}(\dot{q})$.

The algorithm (\ref{eq:stochastic_Hamiltonian_descent}) was studied in the context of stochastic differential equations and Langevin dynamics in
\citet{stoltz_trstanova_2018}, where the noise is assumed to be Gaussian. In general, this is however a restrictive assumption in the stochastic optimization setting. 

The specific update 
(\ref{eq:normalized_sgd_with_momentum}) bears resemblance to deterministic time integration-  and optimization schemes studied in \citet{franca_et_al_2020}, that arise as discretizations of the system
\begin{align*}
\dot{p} &= - \nabla_q H(p,q) - \gamma p,\\
\dot{q} &= \nabla_p H(p,q),
\end{align*}
where the dissipation term $\gamma p$ emanates from \emph{Bateman's Lagrangian} $L(q,\dot{q}) = e^{\gamma t}(\varphi^*(\dot{q}) - F(q))$, see \citet{bateman_1931}. A similar point of view is also taken in \cite{Franca2020O}, but where so-called Bregman dynamics is employed.
In the (deterministic) optimization setting this was studied in \citet{maddison_et_al_2018}, where strictly convex kinetic energy functions $\varphi$ are considered.
A stochastic gradient version is analysed in \citet{Harshvardhan} for strongly convex objective functions $F$.

However, the stochastic optimization algorithm has not been studied for non-convex problems, and an analysis for merely convex (and not strictly convex) kinetic energy functions is lacking. 

\subsection{Almost sure convergence}

The analysis in this paper is based on the \emph{ODE method}, emanating from \citet{ljung_1976}.
The particular proof strategy is due to 
\cite{kushner_clark_1978}, and is based on linear interpolation of the sequence of iterates.
The technique was extended to piecewise constant interpolations in \cite{kushner_yin_2003}.
The approach relies on the assumption that the iterates generated by the algorithm are finite almost surely; an assumption that has to be verified independently.

A similar analysis of the SGD with momentum was performed in \cite{gadat_et_al_2018}. It was extended in \citet{barakat_et_al_2021}, to a class of schemes that encompasses (\ref{eq:sgd_with_momentum}).
The analytical approach is slightly different and does not cover the normalization- and clipping algorithms that we analyze in this article.

We also note that one can employ an analysis similar to that in e.g.\ \cite{BottouCurtisNocedal.2018}, along with martingale results like that in \cite{robbins_siegmund} to obtain almost sure convergence of a subsequence of the iterates. This is for instance the case in \cite{sebbouh_et_al} where almost sure convergence guarantees of the type $\min_{0 \leq k \leq K } \lVert \nabla F(q_k) \rVert_2 \to 0$ almost surely for SGD and SGD with momentum are established. These types of results are weaker than those obtained in this paper, since they cannot guarantee that the whole sequence of iterates $\{q_k\}_{k \geq 0}$ converges to a stationary point.

\subsection{$(L_0,L_1)-$smoothness}
The $(L_0,L_1)-$smoothness assumption was introduced in \cite{Zhang2020Why} as a more appropriate measure of smoothness than $L$-smoothness for certain machine learning problems. It is shown in \cite{Zhang2020Why} that
the iteration complexity of clipped SGD is bounded, under the assumption that the stochastic gradients are bounded almost surely. The latter is a very restrictive assumption that is not fulfilled even by Gaussian noise. In \cite{zhang_2020} a clipped algorithm with momentum is shown to converge in expectation to a stationary point under the same strong assumptions on the noise. Similar assumptions are also encountered in e.g. \cite{crawshaw2022robustness, li2023convergence}; the latter also considers the slightly more general case of \emph{sub-Gaussian} noise.
\cite{koloskova_et_al_2023} analyses clipped SGD under Assumption \ref{ass:setting_two}.\ref{ass:variance}, but do not obtain a convergence guarantee due to an \emph{unavoidable bias} \citep{koloskova_et_al_2023}.
Recently \cite{WangZM023} and \cite{FawRCS23} obtained convergence guarantees for versions of AdaGradNorm under the weaker affine variance-assumption. These results are however only with a certain probability, and there is always some set of positive measure on which the algorithm may not converge.

The convergence guarantees that we obtain in Theorem 
\ref{thm:main}
under Assumption \ref{ass:setting_two}.\ref{ass:L0L1smoothness} and Assumption \ref{ass:setting_one}.\ref{ass:variance} is stronger in the sense that it converges for every path.
We also stress the fact that Assumption \ref{ass:setting_one}.\ref{ass:variance}
is relatively weak since it covers all heavy-tailed distributions with finite variance \citep{rolski2009stochastic}. This includes for instance the large class of sub-Weibull distributions, which generalizes sub-Gaussian and sub-exponential distributions \citep{vladimirova2020sub}.

\section{Analysis}\label{sec:analysis}
We first give a brief overview of the analysis in Section \ref{sec:overview}. In Section \ref{sec:setting} we describe the setting and
in Section \ref{sec:outline_of_proof} we give a more detailed outline of the theorems and the proofs.
The full proofs of the results are given in Appendix \ref{appendix:analysis}.
\subsection{Brief overview}\label{sec:overview}

The analysis is split into two parts. 

In the first, we show that the iterates of the scheme defined by (\ref{eq:stochastic_Hamiltonian_descent}) are finite almost surely, if the objective function $F$ and the convex kinetic energy function $\varphi$ are $L$-smooth and coercive, or if $F$ is $(L_0,L_1)-$smooth and the variance is finite. This is done by constructing a Lyapunov function with the help of the Hamiltonian $H$, and then appealing to the classical Robbins--Siegmund theorem \citep{robbins_siegmund}.

In the second, we show that given that
the iterates defined by (\ref{eq:stochastic_Hamiltonian_descent}) are bounded, they converge almost surely to a stationary point of $F$. We make use of a modification of the \emph{ODE method}, compare \cite{kushner_yin_2003}. Since the scheme is implicit-explicit, we cannot directly apply e.g.\ Theorem 2.1 in \citet{kushner_yin_2003}.

Essentially, the idea is to 
\begin{enumerate}[label=\roman*),start=1]
\item Introduce a pseudo-time $t_k = \sum_{i=0}^{k-1}\alpha_i$ and construct piecewise constant interpolations $P_0(t)$ and $Q_0(t)$ of $\{p_k\}_{k \geq 0}$ and $\{q_k\}_{k \geq 0}$ from (\ref{eq:stochastic_Hamiltonian_descent}).
\item Show that the time shifted processes $P_k(t)=P_0(t_k+t)$ and $Q_k(t)=Q_0(t_k+t)$ are \emph{equicontinuous in the extended sense} \citep{kushner_yin_2003} and that $P_k(t)$ and $Q_k(t)$ asymptotically satisfies (\ref{eq:nearly_Hamiltonian_system}).
\item At last, make use of the underlying dynamics of (\ref{eq:nearly_Hamiltonian_system}) to conclude that $\{q_k\}_{k \geq 0}$ converges almost surely to a stationary point of $F$.
\end{enumerate}

\subsection{Setting}\label{sec:setting}
Let $(\Omega, \mathcal{F}, \Prob)$ be a probability space, and $\{\xi_k\}_{k\geq 0}$ be a sequence of independent, identically distributed random variables. We further let $\mathcal{F}_k $ denote the $\sigma-$algebra generated by $\xi_0,\dots, \xi_{k-1}$. 
By $\E_{\xi_k} \left[ X \right]$ we denote the conditional expectation of a random variable $X$ with respect to $\mathcal{F}_k$.
For a set $A\subset \mathbb{R}^d$, we let 
$N_{\delta}(A) = \{x: \inf_{a \in A}\| x -a \| < \delta\}$.
\subsubsection{Basic assumptions}

We make the following basic assumptions on $f$, $F$ and $\varphi$:
\begin{assumption}\label{ass:basic}
The objective function $F$ is differentiable and satisfies:
\begin{enumerate}[label=\roman*),start=1]
\item\label{ass:coercive} \emph{(Coercivity)} $\lim_{\lVert x \rVert_2 \to \infty} F(x) = \infty$. 
\item\label{ass:proper} \emph{(Proper)} There is a number $F_*> - \infty$ such that $F(x) \geq F_*, \ \forall x \in \R^d$. 
\item\label{ass:critical_points_locallyfinite}  \emph{(Locally finite cardinality)} Let $\Lambda = \{q: \nabla F(q)=0\}$. For every compact set $K \subset \mathbb{R}$, the set $F(\Lambda) \cap K$ has finite cardinality. 
\end{enumerate}

Further, the stochastic gradient $\nabla f$ is an unbiased estimator of $\nabla F$, i.e.
\begin{enumerate}[label=\roman*),start=4]
\item\label{ass:unbiased} $\mathbb{E} \left[ \nabla f(x,\xi) \right]=\nabla F(x)$. 
\end{enumerate}

\end{assumption}

\begin{remark}\label{remark:coercive}
  Assumption \ref{ass:basic}.\ref{ass:coercive} implies that the sublevel sets $\{x: F(x)\leq c\}$ are bounded, compare Proposition~11.12 in \citet{bauschke_2011}. 
\end{remark}

\begin{remark}\label{remark:critical_points_locallyfinite}
Assumption \ref{ass:basic}.\ref{ass:critical_points_locallyfinite} is slightly more general than the assumption that $F(\Lambda)$ has finite cardinality, which one often sees; compare e.g. \cite{benaim_1996}. We make use of it in Lemma \ref{lemma:locally_asymptotically_stable}, in order to show that the sublevel sets of the Hamiltonian are locally asymptotically stable.
\end{remark}

\begin{assumption}\label{ass:kinetic_energy_function}
The kinetic energy function $\varphi$ is differentiable and satisfies:
\begin{enumerate}[label=\roman*),start=1]
\item\label{ass:Lipschitzcontinuous_gradient_phi} \emph{(Lipschitz continuous $\nabla \varphi$)} There is a constant $\lambda>0$ such that  $\lVert \nabla  \varphi(y) - \nabla \varphi(x)\rVert_2 \leq \lambda \lVert x -y \rVert_2$, for all $ x,y \in \R^d$. 

\item\label{ass:convexity} \emph{(Convexity)} It holds that 
$\varphi(y) -\varphi(x) \leq \langle \nabla \varphi(y),y-x \rangle$ for all $x,y \in \R^d$. 
\item\label{ass:coercive_kinetic} \emph{(Coercivity)} $\lim_{\lVert x \rVert_2 \to \infty} \varphi(x) = \infty$. 
\item\label{ass:proper_kinetic} \emph{(Proper)} For all $x \in \R^d$, it holds that $\varphi(x) \geq \varphi_*>-\infty$. 
\end{enumerate}
\end{assumption}

\begin{remark}\label{remark:phi_coercive}
Asumption \ref{ass:basic}.\ref{ass:coercive}, \ref{ass:basic}.\ref{ass:proper}, \ref{ass:kinetic_energy_function}.\ref{ass:coercive_kinetic} and \ref{ass:kinetic_energy_function}.\ref{ass:proper_kinetic} together implies that the Hamiltonian $H(p,q) = F(q) + \varphi(p)$ is coercive as a function of $q$ and $p$.
\end{remark}

In addition to these basic assumptions, we consider three different settings. 

\subsubsection{Setting 1}
In the first setting, $\nabla F$ is Lipschitz continuous but the stochastic gradients can have large variance.
\begin{assumption}\label{ass:setting_one}
The objective function $F$ and the stochastic gradient $\nabla f$ further satisfy:

\begin{enumerate}[label=\roman*),start=1]
\item\label{ass:Lipschitzcontinuous_gradient} \emph{(Lipschitz-continuous $\nabla F$)} There is a constant $L>0$ such that  $\lVert \nabla F(y) - \nabla F(x)\rVert_2 \leq L \lVert x -y \rVert_2$, for all $ x,y \in \R^d$. 
\item\label{ass:expectedsmoothness} \emph{(Locally bounded variance)}
  \begin{equation*}
  \mathbb{V} \left[  \nabla f(x,\xi)  \right] \leq \kappa \left(F(x) - F_* \right) + \tau \lVert \nabla F(x) \rVert_2^2 +\sigma^2,
\end{equation*}
\end{enumerate}
where $\kappa, \sigma, \tau \geq 0$. 

\end{assumption}
Assumption \ref{ass:setting_one}.\ref{ass:Lipschitzcontinuous_gradient} implies that the inequality $
F(y) - F(x) \leq \langle \nabla F(x), y-x \rangle + \frac{L}{2}\lVert x -y \rVert_2^2$ holds
for all $x,y \in \R^d$, compare Lemma~1.2.3~in~ \cite{nesterov_2018}.
Assumption \ref{ass:setting_one}.\ref{ass:expectedsmoothness} was first introduced in \cite{khaled2023bettertheorysgdnonconvex} where it was called ``expected smoothness''. It is weak in the sense that it allows for infinite variance in the case that either the gradient or the objective function becomes infinitely large. 
The condition is similar to e.g. the ``affine noise variance'' in \cite{pmlr-v195-wang23a} and the ``affine variance'' in \cite{FawRCS23}.

\subsubsection{Setting 2}
Alternatively, we consider the following setting, where we require less regularity of $F$ but instead restrict the variance of the stochastic gradient.

\begin{assumption}\label{ass:setting_two}
The objective function $F$ and the stochastic gradient $\nabla f$ further satisfy:

\begin{enumerate}[label=\roman*),start=1]
\item\label{ass:L0L1smoothness} \emph{($(L_0,L_1)-$smoothness)}
There exists $L_0, L _1$ such that for all $x,y \in \mathbb{R}^d$, if $\lVert x -y \rVert_2 \leq \frac{1}{L_1}$, then
\begin{equation*}
\lVert \nabla F(x) - \nabla F(y) \rVert 
\leq \left( L_0 + L_1 \lVert \nabla F(y)\rVert_2\right)\lVert x- y \rVert_2.
\end{equation*}

\item\label{ass:variance} \emph{(Bounded variance)} $\mathbb{V} \left[  \nabla f(x,\xi)  \right] \leq \sigma^2$,
\item\label{ass:bounded_clipping} \emph{(Bounded $\nabla \varphi$)}
There exists $\Delta > 0$ such that $\lVert \nabla \varphi(x) \rVert_2 
\leq \Delta$ for all $x \in \R$. 
\end{enumerate}
  Here $\mathbb{V} \left[  \nabla f(x,\xi)  \right] = \E \left[  \lVert \nabla f(x,\xi) \rVert_2^2 \right] - \lVert \nabla F(x) \rVert_2^2$. 
\end{assumption}

\begin{remark}\label{remark:lyapunov}
By \emph{Lyapunov's inequality}, compare p.~230~in~\cite{shiryaev_2016}, it follows from Assumption \ref{ass:setting_two}.\ref{ass:variance} that
$\mathbb{E} \left[ \lVert  \nabla f(x,\xi)-\nabla F(x) \rVert_2  \right] \leq \sigma.$
\end{remark}
Assumption \ref{ass:setting_two}.\ref{ass:variance} is sometimes referred to as a \emph{heavy-tailed noise} assumption in the literature \citep{gorbunov2020stochastic, koloskova_et_al_2023}. It covers all zero-mean, heavy-tailed distributions with finite second moment, compare \cite{rolski2009stochastic}. In particular, it also includes the large class of \emph{sub-Weibull distributions}
\citep{vladimirova2020sub}, which generalizes random variables of sub-Gaussian and sub-Exponential distribution.

\subsubsection{Setting 3}
Assumption \ref{ass:setting_two}.\ref{ass:variance} may be restrictive in some cases, compare \citep{gurbuzbalaban21_2021}. Therefore we also consider the the empirical risk minimization setting when the objective function is on the form (\ref{eq:F_sum}), $f(\cdot,\xi)$ is $(L_0,L_1)-$smooth and $\nabla f(\cdot,\xi)$ is given by (\ref{eq:nablaf_batch}). In this setting, we can 
further lower the assumptions on the noise to merely finite expectation:
\begin{assumption}\label{ass:setting_three}
The objective function satisfies \ref{ass:setting_two}.\ref{ass:L0L1smoothness} and $\nabla \varphi$ satisfies \ref{ass:setting_two}.\ref{ass:bounded_clipping}.
The objective function $F$ and the stochastic gradient $\nabla f$ further satisfy:
\begin{enumerate}[label=\roman*),start=1]
\item\label{ass:empiricalriskminimization} \emph{(Empirical risk minimization)} The objective function and the stochastic gradient are of the form (\ref{eq:F_sum}) and (\ref{eq:nablaf_batch}),
where for each $i$ it holds that $\inf_{q \in \mathbb{R}^d} f_i(q) > - \infty$.
\item\label{ass:heavytailednoise} \emph{(Bounded expectation)} The stochastic gradients satisfy
  \begin{equation*}
\mathbb{E}\left[ \lVert \nabla f(x,\xi) - \nabla F(x) \rVert_2 \right] \leq \sigma.
\end{equation*}
\item\label{ass:stochasticfunctionsL0L1smooth} \emph{($(L_0,L_1)-$smooth)}  
For each $\xi$, $f(\cdot,\xi)$ is $(L_0,L_1)$-smooth. 
\end{enumerate}
\end{assumption}

\begin{remark}
  We observe that SGD with momentum, corresponding to~(\ref{eq:sgd_with_momentum}), requires $\varphi(x) = \lVert x \rVert^2 / 2$, which does not satisfy Assumption~\ref{ass:setting_two}.\ref{ass:bounded_clipping}. It is thus only covered by the analysis in Setting 1. In Setting 2 and 3, we have a weaker regularity assumption on $F$, and this requires us to instead pose stricter requirements on the methods. Overall this indicates that for $(L_0,L_1)-$smooth cost functionals, clipping methods are a better option than SGD.
\end{remark}

\subsubsection{Book-keeping assumptions}
The following assumption on the step sizes $\alpha_k$ is standard and originates from \citet{robbins_monro_1951}. Informally, the step sizes must go to zero in order to counter the stochasticity, but do so slowly enough that we have time to reach a stationary point.
\begin{assumption}[Step sizes]\label{ass:step_size}
The step size sequence $\{\alpha_k\}_{k \geq 0}$ satisfies $\alpha_0 = 0$ and 
$\{\alpha_k\}_{k \geq 0} \in \ell^2(\mathbb{R}) \backslash \ell^1(\mathbb{R})$.
\end{assumption}

Our analysis shows convergence to the set of stationary points of $\nabla F$. Under the following additional assumption, we get convergence to a unique stationary point:
\begin{assumption}\label{ass:isolated_equilibria}
The stationary points of $F$ are isolated.
\end{assumption}

\subsection{Outline of proof}\label{sec:outline_of_proof}
The proofs of the results in this section can be found in Appendix \ref{appendix:analysis}.
The main theorem is an extension of the approach in \cite{kushner_yin_2003}:
\begin{restatable}{theorem}{thmmain}\label{thm:main}
Let Assumptions~\ref{ass:basic}, \ref{ass:kinetic_energy_function} and~\ref{ass:step_size} be satisfied, as well as either Assumption~\ref{ass:setting_one}, \ref{ass:setting_two} or \ref{ass:setting_three}. Then $\{q_k\}_{k \geq 0}$ converges almost surely to the set of stationary points of the objective function $F$. 
If we additionally assume that Assumption~\ref{ass:isolated_equilibria} holds, the convergence is to a unique stationary point.
\end{restatable}
The following result is a direct consequence of Theorem~\ref{thm:main}:
\begin{restatable}[Convergence in expectation]{corollary}{corconvinexp}
\label{cor:convinexp}
Let Assumptions~\ref{ass:basic}, \ref{ass:kinetic_energy_function},  \ref{ass:step_size} and \ref{ass:isolated_equilibria} be valid. Further, let the Hamiltonian be on the form (\ref{eq:separable_Hamiltonian}) and let the sequences $\{p_k\}_{k\geq 0}$ and $\{q_k\}_{k\geq 0}$ be generated by~(\ref{eq:stochastic_Hamiltonian_descent}).
Then it holds that
\begin{align*}
\lim_{k \to \infty} \mathbb{E} \left[ 
\lVert \nabla F(q_k) \rVert_2^{\theta} \right] = 0,
\end{align*}
where $\theta = 1$ under Assumption~\ref{ass:setting_one} and 
$\theta = \frac{1}{2}$ under Assumption~\ref{ass:setting_two} or \ref{ass:setting_three}.
\end{restatable}
Our proof strategy consists of two parts. In the first part we show that the sequences $\{p_k\}_{k \geq 0}$ and $\{q_k\}_{k \geq 0}$ are finite almost surely:
\begin{restatable}[Finiteness of $\{p_k\}_{k\geq 0}$ and $\{q_k\}_{k\geq 0}$]{theorem}{thmpkqkbdd}
\label{thm:pqbounded}
Let Assumptions~\ref{ass:basic}, \ref{ass:kinetic_energy_function} and \ref{ass:step_size} be valid, as well as either Assumption~\ref{ass:setting_one},~\ref{ass:setting_two} or \ref{ass:setting_three}. Further, let the Hamiltonian be on the form (\ref{eq:separable_Hamiltonian}) and let the sequences$\{p_k\}_{k\geq 0}$ and $\{q_k\}_{k\geq 0}$ be generated by~ (\ref{eq:stochastic_Hamiltonian_descent}).
Then $\{p_k\}_{k\geq 0}$ and $\{q_k\}_{k\geq 0}$ are finite almost surely. Moreover, it holds that $\sup_{k\geq 0} \mathbb{E} \left[F(q_k) \right] < \infty$.
\end{restatable}
In the second part of the analysis, we closely follow the ODE method approach as outlined in~\citet{kushner_yin_2003}: We start by introducing a pseudo-time $t_k = \sum_{i=0}^{k-1} \alpha_i$, and define two piecewise constant, (stochastic) interpolation processes by
\begin{align}\label{def:P0Q0}
\begin{split}
P_0(t) &= p_0 I_{(-\infty,t_{0}]}(t) + \sum_{k=0}^{\infty} p_k I_{[t_k,t_{k+1})}(t), \\
Q_0(t) &= q_0 I_{(-\infty,t_{0}]}(t) + \sum_{k=0}^{\infty} p_k I_{[t_k,t_{k+1})}(t).
\end{split}
\end{align}
We next consider the shifted sequence of processes 
$\{P_k\}_{k\geq 0}$ and $\{Q_k\}_{k\geq 0}$, defined by
\begin{align}
  \label{def:PkQk}  
  \begin{split}
    P_k(t)&=P_0(t_k + t),\\ 
    Q_k(t)&=Q_0(t_k + t).
  \end{split}
\end{align}
We note that $\{P_k\}_{k\geq 0}$ and $\{Q_k\}_{k\geq 0}$ are stochastic processes; they depend on $\omega \in \Omega$\footnote{Here $\omega $ is an \emph{outcome} and $\Omega$ is the \emph{sample space} of the underlying probability space $\left(\Omega, \mathcal{F}, \Prob\right)$.}
through the stochasticity of the sequences $\{p_k\}_{k \geq 0}$ and $\{q_k\}_{k\geq 0}$.
For brevity we will refrain from writing out the dependence on $\omega$.

The next step is to introduce the concept of \emph{extended equicontinuity} \citep{kushner_yin_2003,freise}:
\begin{restatable}[Extended equicontinuity]{definition}{defextendedequicontinuity}\label{def:equicontinuity_in_the_extended_sense}
A sequence of $\R^d$-valued functions $\{f_k\}_{k \geq 0}$, defined on $(-\infty,\infty)$, is said to be \emph{equicontinuous in the extended sense} if $\{ |f_k(0)| \}_{k \geq 0}$ is bounded and for every $T > 0$ and $\epsilon > 0$ there is a $\delta > 0$ such that 
\begin{align}\label{limsup}
\limsup_{k \to \infty} \sup_{0<|t-s| \leq \delta, \ t, s \in [0,T]}|f_k(t) - f_k(s)| \leq \epsilon.
\end{align}
\end{restatable}
Following \citet{freise}, we show that the process $\{Z_k\}_{k\geq 0}= \{(P_k,Q_k)\}_{k\geq 0}$ is equicontinuous in the extended sense:
\begin{restatable}[Equicontinuous in the extended sense]{lemma}{lemmaPkQkequicontinuous}\label{lemma:PkQk_equicontinuous}
Consider $\{Z_k\}_{k\geq 0}=\{(P_k,Q_k)\}_{k\geq 0}$  where the sequences $\{P_k\}_{k\geq 0}$ and $\{Q_k\}_{k\geq 0}$ 
are defined by~(\ref{def:PkQk}) (equivalently, by~(\ref{identity:PkQk})). Suppose that $\{p_k\}_{k\geq 0}$ and $\{q_k \}_{k\geq 0}$ are defined by (\ref{eq:stochastic_Hamiltonian_descent}), and that the Hamiltonian is on the form (\ref{eq:separable_Hamiltonian}). Further, let Assumptions~\ref{ass:basic}, \ref{ass:kinetic_energy_function} and \ref{ass:step_size} be valid, as well as either Assumption~\ref{ass:setting_one},~\ref{ass:setting_two} or \ref{ass:setting_three}.
Then $\{Z_k\}_{k\geq 0}$ is equicontinuous in the extended sense, almost surely.
\end{restatable}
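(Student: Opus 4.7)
The plan is to verify the two requirements of Definition~\ref{def:equicontinuity_in_the_extended_sense} separately, working pathwise on the almost sure event where Assumption~\ref{ass:pk_qk_finite_almost_surely} holds and where the stochastic gradient noise partial sums converge. Since $Z_k(0)=(p_k,q_k)$ by the definition of the shifted processes in \eqref{def:PkQk}, boundedness of $\{|Z_k(0)|\}_{k\geq 0}$ is immediate from Assumption~\ref{ass:pk_qk_finite_almost_surely}.

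For the uniform continuity bound, I would fix $T,\epsilon>0$ and $t,s\in[0,T]$ with $0<t-s\leq\delta$, and let $m\leq n$ be the indices satisfying $t_k+s\in[t_m,t_{m+1})$ and $t_k+t\in[t_n,t_{n+1})$. Telescoping using \eqref{eq:stochastic_Hamiltonian_descent} gives
\begin{align*}
p_n-p_m&=-\sum_{j=m}^{n-1}\alpha_j\nabla F(q_j)-\gamma\sum_{j=m}^{n-1}\alpha_j\nabla\varphi(p_j)-\sum_{j=m}^{n-1}\alpha_j M_j,\\
q_n-q_m&=\sum_{j=m}^{n-1}\alpha_j\nabla\varphi(p_{j+1}),
\end{align*}
where $M_j=\nabla f(q_j,\xi_j)-\nabla F(q_j)$ is a martingale difference sequence with respect to $\{\mathcal{F}_k\}_{k\geq 0}$ by Assumption~\ref{ass:stochastic_gradient}.\ref{ass:unbiased}. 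On the almost sure event that $\{p_k\}$ and $\{q_k\}$ are bounded, Assumptions~\ref{ass:objective_function}.\ref{ass:Lipschitzcontinuous_gradient} and~\ref{ass:kinetic_energy_function}.\ref{ass:Lipschitzcontinuous_gradient_phi} give a pathwise constant $C$ bounding $\|\nabla F(q_j)\|_2$, $\|\nabla\varphi(p_j)\|_2$ and $\|\nabla\varphi(p_{j+1})\|_2$ uniformly in $j$. Since $\sum_{j=m}^{n-1}\alpha_j=t_n-t_m\leq t-s+\alpha_m$, each of the three drift sums is bounded by $C(\delta+\alpha_m)$, and the $\alpha_m$ contribution vanishes in the $\limsup$ as $k\to\infty$ by Assumption~\ref{ass:step_size}.

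The main obstacle is the stochastic term $\sum_{j=m}^{n-1}\alpha_j M_j=S_n-S_m$, where $S_k:=\sum_{j=0}^{k-1}\alpha_j M_j$. Using Assumption~\ref{ass:stochastic_gradient}.\ref{ass:variance} one has $\E\bigl[\|\alpha_j M_j\|_2^2\mid\mathcal{F}_j\bigr]\leq\alpha_j^2\sigma^2$, and Assumption~\ref{ass:step_size}.\ref{ass:l2summable} gives $\sum_j\alpha_j^2<\infty$, so $\{S_k\}_{k\geq 0}$ is an $L^2$-bounded martingale. By Doob's martingale convergence theorem it converges almost surely and is therefore a.s.\ Cauchy. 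Since $t_k\to\infty$ by Assumption~\ref{ass:step_size}.\ref{ass:notl1summable}, both $m$ and $n$ tend to infinity uniformly in $s,t\in[0,T]$ as $k\to\infty$, so on this event
\[
\limsup_{k\to\infty}\sup_{\substack{t,s\in[0,T]\\ 0<t-s\leq\delta}}\Bigl\|\sum_{j=m}^{n-1}\alpha_j M_j\Bigr\|_2=0.
\]
Combining with the drift bound gives $\limsup_{k\to\infty}\sup_{0<t-s\leq\delta}|Z_k(t)-Z_k(s)|\leq C'\delta$ almost surely, for a pathwise constant $C'$ collecting the deterministic contributions, and choosing $\delta<\epsilon/C'$ completes the verification.
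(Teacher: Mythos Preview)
Your proof is correct and follows essentially the same approach as the paper: bound the increments via the telescoped update~\eqref{eq:stochastic_Hamiltonian_descent}, control the drift terms using the a.s.\ boundedness from Assumption~\ref{ass:pk_qk_finite_almost_surely} together with $\sum_{j=m}^{n-1}\alpha_j\le(t-s)+\alpha_m$, and show the noise term vanishes in the $\limsup$. The only cosmetic differences are that the paper works with the equivalent null-sequence characterization of extended equicontinuity (its Lemma~\ref{lemma:equivdefequicont}) rather than the $\limsup$ directly, and handles the martingale term via Doob's maximal inequality (its Lemma~\ref{lemma:convergence_of_Mk}) rather than the $L^2$-martingale convergence theorem; both routes yield the same Cauchy-type control on $S_n-S_m$.
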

We can then appeal to the \emph{extended/discontinuous Arzel{\`a}--Ascoli theorem} to conclude that $\{Z_k\}_{k\geq 0}$ has a subsequence that converges to a continuous function $z$:
\begin{restatable}[Discontinuous Arzel{\`a}--Ascoli theorem]{theorem}{thmdiscontinuousarzelaascoli}\label{thm:discontinuos_arzela_acoli}
Let $\{f_k\}_{k \geq 0}$ be a sequence of functions, defined on $\R^d$, that is equicontinuous in the extended sense. Then there is a subsequence  $\{f_{n_k}\}_{n_k \geq 0}$ of $\{f_k\}_{k \geq 0}$, that converges uniformly on compact sets to a continuous function.
\end{restatable}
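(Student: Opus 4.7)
My plan is to follow the classical Arzel\`a--Ascoli strategy, where the only new twist is that extended equicontinuity gives the modulus-of-continuity bound only as a $\limsup$, not uniformly in $k$. The bound therefore fails for at most finitely many indices on each compact interval, and the whole argument goes through once we consistently restrict to sufficiently large indices after each invocation of the modulus.

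The first step is to show that on any interval $[0,T]$ the tail of the sequence is uniformly bounded. Fixing $\epsilon = 1$ and picking the associated $\delta>0$ from the definition, the $\limsup$ inequality supplies an index $k_0(T)$ such that for all $k \geq k_0(T)$ the modulus bound $\sup_{|t-s|\leq\delta,\,t,s\in[0,T]}|f_k(t)-f_k(s)| \leq 2$ holds simultaneously. Chaining $\lceil T/\delta \rceil$ many $\delta$-hops from $0$ to any $t\in[0,T]$ then yields $|f_k(t) - f_k(0)| \leq 2\lceil T/\delta\rceil$ for $k \geq k_0(T)$; combined with the hypothesis that $\{|f_k(0)|\}_k$ is bounded, this gives a uniform bound $\sup_{t\in[0,T]}|f_k(t)| \leq M(T)$ for all $k\geq k_0(T)$. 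Next I would pick a countable dense set $D\subset[0,\infty)$ (say the nonnegative rationals), enumerate $D=\{d_1,d_2,\ldots\}$, and perform the standard Bolzano--Weierstrass diagonal extraction so that a subsequence $\{f_{n_k}\}$ converges pointwise on $D$.

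To upgrade pointwise convergence on $D$ to uniform convergence on $[0,T]$, I would use the textbook three-epsilon argument. For $\epsilon>0$, extended equicontinuity supplies $\delta>0$ and $k_1$ such that $|f_{n_k}(t)-f_{n_k}(s)|\leq \epsilon/3$ whenever $|t-s|\leq\delta$, $t,s\in[0,T]$ and $k\geq k_1$. Choosing a finite $\delta$-net $\{d_{i_1},\ldots,d_{i_m}\}\subset D$ for $[0,T]$ and using pointwise convergence at those finitely many nodes gives $k_2$ such that $|f_{n_j}(d_{i_\ell})-f_{n_k}(d_{i_\ell})| \leq \epsilon/3$ for $j,k\geq k_2$ and each $\ell$. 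The standard triangle-inequality estimate then yields $|f_{n_j}(t)-f_{n_k}(t)|\leq \epsilon$ uniformly in $t\in[0,T]$, so the subsequence is uniformly Cauchy on $[0,T]$. Exhausting $[0,\infty)$ by intervals $[0,T_m]$ with $T_m\to\infty$ and diagonalising once more produces a single subsequence converging uniformly on every compact subset of $[0,\infty)$. The limit $f$ inherits continuity by passing the same $(\epsilon,\delta)$-modulus estimate to the uniform limit, again using the tail of the subsequence to absorb the $\limsup$.

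The main obstacle, and the reason the statement is phrased with $\limsup$ rather than a uniform modulus, is precisely the bookkeeping of ``eventually'': at each step where the extended equicontinuity is invoked (uniform boundedness, the three-epsilon step, continuity of the limit) one must discard finitely many offending indices by moving to a large-enough tail of the subsequence, and one must check that the various thresholds $k_0(T_m)$, $k_1$, $k_2$ can be reconciled through the final diagonalisation. Once this is handled systematically, the remainder of the proof is the standard Arzel\`a--Ascoli argument.
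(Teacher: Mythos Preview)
Your proposal is correct and follows the standard Arzel\`a--Ascoli argument, properly adapted to the $\limsup$ formulation. The paper itself does not give a proof of this theorem at all: it simply refers the reader to Theorem~6.2 in Droniou--Eymard (2016) and Theorem~12.3 in Billingsley (1968). So there is no ``paper's own proof'' to compare against; you have supplied what the authors outsourced.

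Two small remarks on the write-up. First, the second diagonalisation you mention (``exhausting $[0,\infty)$ by $[0,T_m]$ and diagonalising once more'') is unnecessary: the single subsequence $\{f_{n_k}\}$ you extract from the dense set $D$ already converges uniformly on \emph{every} $[0,T]$ by your three-$\epsilon$ argument, since that argument works for arbitrary $T$ and the subsequence does not change with $T$. Second, the statement in the paper says ``defined on $\R^d$'', which is evidently a typo for ``$\R^d$-valued, defined on $\R$'' (cf.\ Definition~\ref{def:equicontinuity_in_the_extended_sense}); you read it correctly, and your restriction to $[0,\infty)$ is the right one because the extended-equicontinuity hypothesis only controls $t,s\in[0,T]$.
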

The proof, which can be found in Appendix \ref{appendix:arzela_ascoli}, is based on Theorem~6.2 in
\citet{droniou_eymard_2016} and is a modification of Theorem 16.4 in \citet{billingsley1968}. See also \citep{kushner_yin_2003,freise}.

With this established, we proceed to show that $\{Z_k\}_{k\geq 0}$ is an \emph{asymptotic solution}\footnote{By \emph{Grönwall's inequality} (compare e.g.\ \citet{ethier_kurtz_1986}) this is equivalent to (\ref{def:P0Q0}) being an \emph{asymptotic pseudotrajectory} \citep{benaim_1999} to (\ref{eq:nearly_Hamiltonian_system}).} to (\ref{eq:nearly_Hamiltonian_system}); i.e.\ asymptotically $\{P_k\}_{k\geq 0}$ and $\{Q_k\}_{k\geq 0}$ satisfy (\ref{eq:nearly_Hamiltonian_system}).
More precisely we show
\begin{restatable}[Asymptotic solutions]{lemma}{lemmaPkQkasymptoticsolutions}\label{lemma:PkQk_asymptotic_solutions}
  Under the same assumptions and notation as in Lemma~\ref{lemma:PkQk_equicontinuous},
we can write 
\begin{align}\label{eq:PkQk_asymptotic_solutions}
\begin{split}
P_k(t) = P_k(0) &- \int_0^t\nabla F(Q_k(s)) \mathrm{d}s  -  \gamma \int_0^t\nabla \varphi(P_k(s)) \mathrm{d}s  + M_k(t) + \mu_k(t),\\
 Q_k(t) = Q_k(0)  &+ \int_0^t \nabla \varphi(P_k(s)) \mathrm{d}s  + \nu_k(t) + \kappa_k(t),
 \end{split}
\end{align}
where the functions $\{ M_k\}_{k\geq0}$, $\{\mu_k\}_{k \geq 0}$, $\{\nu_k\}_{k \geq 0}$ and
$\{\kappa_k\}_{k \geq 0}$ converge to $0$ uniformly on compact sets almost surely.
\end{restatable}
It follows that any limit point of $\{Z_k\}_{k\geq 0}$ satisfies
\begin{align}\label{eq:nearly_Hamiltonian_system_integral}
\begin{split}
P(t) &= P(0) - \int_0^t\nabla F(Q(s)) \mathrm{d}s  -  \gamma \int_0^t\nabla \varphi(P(s)) \mathrm{d}s  \\
 Q(t) &= Q(0)  + \int_0^t \nabla \varphi(P(s)) \mathrm{d}s .
 \end{split}
\end{align}
The limits we can extract by appealing to Theorem \ref{thm:discontinuos_arzela_acoli} are continuous. Thus it follows from (\ref{eq:nearly_Hamiltonian_system_integral}) and the fundamental theorem of calculus that they are differentiable and satisfy (\ref{eq:nearly_Hamiltonian_system}).
\begin{remark}
The functions $\{\mu_k\}_{k \geq 0}$ and $\{ \nu_k \}_{k \geq 0}$ are essentially what is left when we have rewritten the sums in (\ref{def:P0Q0}) as integrals.
The functions $\{ M_k\}_{k\geq0}$ account for the difference between $\nabla F(q_k)$ and $\nabla f(q_k,\xi_k)$ and $\kappa_k(t)$ for the implicit discretization in the second equation of (\ref{eq:stochastic_Hamiltonian_descent}).
\end{remark}
\begin{remark}
The convergence ``uniformly on compact sets almost surely'' is to be understood as uniformly on compact sets in $t$ and almost surely in $\omega$. 
For example, for the sequence $\{ M_k\}_{k\geq0}$ we have that for any compact set $K \subset \R$, $\lim_{k \to \infty} \sup_{t \in K} \lVert M_k(t)(\omega)\rVert_2=0$
for almost all $\omega \in \Omega$.
\end{remark}
We recall the definition of a \emph{locally asymptotically stable set} \citep{borkar_2008,kushner_yin_2003}:
\begin{restatable}[Locally asymptotically stable set]{definition}{deflocallyasymptoticallystableset}\label{def:locally_asymptotically_stable}
 A  set $A$ is said to be \emph{Lyapunov
stable} if for any $\epsilon>0$, there exists a $\delta >0$ such that every trajectory initiated
in the $N_{\delta}(A)$ remains in $N_{\epsilon}(A)$. It is \emph{locally asymptotically stable} if every such path ultimately goes to $A$.
\end{restatable}
With this in mind, we show the following theorem, which is essentially an adaptation of Theorem 5.2.1 in \cite{kushner_yin_2003}.

\begin{restatable}{theorem}{thmkushneryin}\label{thm:kushner_yin}
  Under the same assumptions and notation as in Theorem \ref{thm:main}, let $A$ be a locally asymptotically stable set for (\ref{eq:nearly_Hamiltonian_system}). If there exists a compact set in the domain of attraction of $A$ that $\{z_k\}_{k \geq 0}$ visits infinitely often, then $z_k \to A$ almost surely:
  \begin{align*}
    \lim_{k \to \infty} \inf_{a \in A} \lVert z_k -a \rVert_{\ell^2\left(\R^{2d}\right)} =0, \ \text{ a.s.}
  \end{align*}
\end{restatable}

The next step is to prove the following, which gives us specific locally asymptotically stable sets:
\begin{restatable}{lemma}{lemmalocallyasymptoticallystable}\label{lemma:locally_asymptotically_stable}
  Consider the same assumptions and notation as in Theorem~\ref{thm:main}.
For each $c$, if the set $\{z:  H(z)\leq c\}$ is non-empty, it is a locally asymptotically stable set for the solutions to~(\ref{eq:nearly_Hamiltonian_system}).
\end{restatable}
In particular, the set $A = \{z:  H(z) \leq \liminf_k H(z_k)\}$ is locally asymptotically stable. By the properties of $\liminf$, we can also find a compact set which $z_k$ enters infinitely often, and we can therefore apply Theorem~\ref{thm:kushner_yin} to conclude that $z_k \to A$. The final step is to show that this convergence in fact implies convergence to the set of stationary points of $H$, and therefore that $q_k$ converges to the set of stationary points of $F$. Under Assumption \ref{ass:isolated_equilibria} we can additionally conclude that the convergence is to a unique equilibrium.

\section{Conclusions}\label{sec:conclusions}
In this paper, we have shown that the stochastic Hamiltonian descent algorithm (\ref{eq:stochastic_Hamiltonian_descent}), arising as a stochastic explicit-implicit Euler discretization of (\ref{eq:nearly_Hamiltonian_system}), under weak assumptions converges almost surely to the set of stationary points of the objective function $F$.
In the terminology of~\cite{robbins_monro_1951}, this means that the estimator determined by $\{q_k\}$ is a strongly consistent estimator of a stationary point of $F$. (Here, the designated asymptoticity is with respect to the number of iterations instead of the sample size.) Similarly, the result in Corollary~\ref{cor:convinexp} is akin to $\{q_k\}_{k \geq 0}$ being an asymptotically unbiased estimator of a stationary point $q_*$.

\bibliography{refs}
\bibliographystyle{abbrvnat}

\appendix

\section{Analysis}\label{appendix:analysis}
As explained in Section~\ref{sec:outline_of_proof}, the two main steps of the convergence analysis are to first prove that $p_k$ and $q_k$ are finite almost surely, and then to use this a priori result to show that they in fact converge.

\subsection{The sequences $\{p_k\}_{k \geq 0}$ and $\{q_k\}_{k \geq 0}$ are finite almost surely}\label{appendix:pkqk_bdd_as}
We first prove Theorem \ref{thm:pqbounded}:
\thmpkqkbdd*
The proof relies on the Robbins-Siegmund theorem:
\begin{theorem}[\citep{robbins_siegmund}]\label{thm:robbins_siegmund}
Let $(\Omega,\mathcal{F}, \Prob)$ be a probability space and 
$\mathcal{F}_1 \subset \mathcal{F}_2 \subset  \dots$ be a sequence of sub-$\sigma$-algebras of $\mathcal{F}$.
For each $k=1,2,\dots$ let $V_k,\beta_k,X_k$ and $Y_k$ be 
non-negative $\mathcal{F}_k$-measurable random variables such that
\begin{align*}
\E \left[ V_{k+1} | \mathcal{F}_k \right]
\leq
V_k(1 + \beta_k) + X_k - Y_k.
\end{align*}
Then on the set
$
\left\{ \omega: \sum_k \beta_k < \infty, \sum_k X_k <\infty \right\},
$
the limit
\begin{align*}
\lim_{k \to \infty} V_k = V 
\end{align*}
exists and is finite and $\sum_k Y_k < \infty$.
\end{theorem}
We first consider Setting 1, i.e.\ with Assumption~\ref{ass:setting_one}.
The strategy is to introduce  $V_k = H(p_k,q_k) -F_* - \varphi_*= F(q_k) - F_*+ \varphi(p_k)- \varphi_*$, then use $L$-smoothness of $F$ and the convexity of $\varphi$ to bound the difference $V_{k+1} -V_k$ by 
$\alpha_k^2 V_k$ plus higher-order terms of $\alpha_k$. Then we can appeal to Theorem \ref{thm:robbins_siegmund} to conclude that $\{p_k\}_{k\geq1}$ and $\{q_k\}_{k\geq1}$ are finite a.s.

\begin{proof}[Proof of Theorem \ref{thm:pqbounded} in Setting 1]
Let $V_k = H(p_k,q_k) - F_* - \varphi_*$.
Then we have that
\begin{align}\label{eq:Vk_diff}
V_{k+1} - V_k
=  F(q_{k+1}) - F(q_k)
+\varphi(p_{k+1}) - \varphi(p_k).
\end{align}
By $L$-smoothness of $F$ and convexity of $\varphi$, this is less than or equal to
\begin{align*}
\langle \nabla F(q_k), q_{k+1} -q_k \rangle + \frac{L}{2} \lVert q_{k+1} -q_k \rVert_2^2 + \langle \nabla \varphi(p_{k+1}),p_{k+1} - p_k \rangle.
\end{align*}
We insert (\ref{eq:stochastic_Hamiltonian_descent}) into the previous expression to obtain that it is equal to
\begin{align*}
  I_1 + I_2 + I_3 :&= \alpha_k \langle \nabla F(q_k)-\nabla f(q_k,\xi_k) , \nabla \varphi(p_{k+1}) \rangle \\
  &\quad+ \frac{L\alpha_k ^2}{2} \lVert  \nabla \varphi(p_{k+1})\rVert_2^2 
  -
                 \alpha_k \gamma \langle \nabla \varphi(p_{k+1}), \nabla \varphi(p_k) \rangle,
\end{align*}
where
\begin{align*}
I_1 &= 
\alpha_k \langle \nabla F(q_k)-\nabla f(q_k,\xi_k) , \nabla \varphi(p_{k}) \rangle \\
&+
\alpha_k \langle \nabla F(q_k)-\nabla f(q_k,\xi_k) , \nabla \varphi(p_{k+1})-  
\nabla \varphi(p_{k}) 
\rangle.
\end{align*}
When we take the conditional expectation (w.r.t. the sigma algebra generated by $\xi_1,\dots,\xi_{k-1}$) of $I_1$, the first term is $0$ by the unbiasedness of the gradient and the independence of $\{\xi_k\}$:
\begin{align*}
\mathbb{E}_{\xi_k} \left[I_1 \right]= 
\alpha_k \mathbb{E}_{\xi_k} \left[ \langle \nabla F(q_k)-\nabla f(q_k,\xi_k) , \nabla \varphi(p_{k+1})-  
\nabla \varphi(p_{k}) 
\rangle  \right] .
\end{align*}
Using Assumption \ref{ass:kinetic_energy_function}.\ref{ass:Lipschitzcontinuous_gradient_phi}, we can bound $I_3$ as
\begin{align*}
I_3=
-
 \alpha_k \gamma \langle \nabla \varphi(p_{k+1}),\nabla \varphi( p_k )\rangle
 &\leq \frac{\alpha_k \gamma}{2}
 \lVert \nabla \varphi(p_{k+1}) - \nabla \varphi(p_k) \rVert_2^2 \\
 &\leq  
 \frac{ \alpha_k \gamma \lambda^2}{2}
 \lVert p_{k+1} - p_k \rVert_2^2.
\end{align*}
After taking the expectation of (\ref{eq:Vk_diff}), we thus get 
the bound
\begin{align}\label{eq:Vk_diff2}
\begin{split}
\mathbb{E}_{\xi_k} \left[ V_{k+1} \right] - V_k
&\leq 
\alpha_k \mathbb{E}_{\xi_k} \left[ \langle \nabla F(q_k)-\nabla f(q_k,\xi_k) , \nabla \varphi(p_{k+1})-  
\nabla \varphi(p_{k}) 
\rangle  \right] \\  & \quad  + 
\frac{L\alpha_k ^2}{2} \mathbb{E}_{\xi_k} \left[ \lVert  \nabla \varphi(p_{k+1})\rVert_2^2 \right]
+ \frac{ \alpha_k \gamma \lambda^2}{2}
 \mathbb{E}_{\xi_k} \left[\lVert p_{k+1} - p_k \rVert_2^2 \right]
 \\&=: I_1' + I_2 + I_3'.
 \end{split}
\end{align}
We now make use of Cauchy--Schwarz inequality along with the Lipschitz continuity of $\nabla \varphi$ to bound $I_1'$ as 
\begin{align*}
I_1' & \leq 
\alpha_k \lambda \mathbb{E}_{\xi_k} \left[ \lVert \nabla F(q_k)-\nabla f(q_k,\xi_k) \rVert_2 \lVert p_{k+1} -p_k
\rVert_2 \right].
\end{align*}
We insert (\ref{eq:stochastic_Hamiltonian_descent}) into the previous expression, and make use of Young's inequality for products, $ab \leq \frac{a^2}{2} + \frac{b^2}{2}$, to obtain that
\begin{align*}
I_1' &\leq
\alpha_k^2  \lambda \mathbb{E}_{\xi_k} \left[ \lVert \nabla F(q_k)-\nabla f(q_k,\xi_k) \rVert_2 \lVert \nabla f(q_k,\xi_k) + \gamma \nabla \varphi(p_k)
\rVert_2 \right] \\
& \leq
\frac{\alpha_k^2}{2}  \lambda \mathbb{E}_{\xi_k} \left[ \lVert \nabla F(q_k)-\nabla f(q_k,\xi_k) \rVert_2^2 \right]\\
&\quad+
\frac{\alpha_k^2}{2}  \lambda \mathbb{E}_{\xi_k} \left[  \lVert \nabla f(q_k,\xi_k) + \gamma \nabla \varphi(p_k)
\rVert_2^2 \right].
\end{align*}
Making use of the inequality
\begin{align}\label{eq:norm_squared_ineq}
\lVert x-y\rVert_2^2 \leq 2\lVert x\rVert_2^2 + 2\lVert y \rVert_2^2,
\end{align}
we can further bound $I_1'$ by
\begin{align*}
I_1'& \leq  \frac{\alpha_k^2}{2}  \lambda \mathbb{E}_{\xi_k} \left[ \lVert \nabla F(q_k)-\nabla f(q_k,\xi_k) \rVert_2^2 \right]
\\
&\quad+
\alpha_k^2  \lambda \mathbb{E}_{\xi_k} \left[  \lVert \nabla f(q_k,\xi_k)
\rVert_2^2 \right]
+
\alpha_k^2  \lambda \gamma ^2 \mathbb{E}_{\xi_k} \left[  \lVert  \nabla \varphi(p_k)
\rVert_2^2 \right].
\end{align*}
At last we make use of Assumption \ref{ass:setting_one}.\ref{ass:expectedsmoothness} to get that
\begin{align*}
I_1'& \leq  \frac{\alpha_k^2}{2}  \lambda \left( \kappa (F(q_k) - F_*) + \tau \lVert \nabla F(q_k) \rVert_2^2 + \sigma^2
\right)
\\
&\quad+
\alpha_k^2  \lambda \left( \kappa (F(q_k) - F_*) + (1+\tau) \lVert \nabla F(q_k) \rVert_2^2 + \sigma^2
\right) \\
&\quad+
\alpha_k^2  \lambda \gamma ^2 \mathbb{E}_{\xi_k} \left[  \lVert  \nabla \varphi(p_k)
\rVert_2^2 \right].
\end{align*}
We now turn our attention to the term $I_2$ in (\ref{eq:Vk_diff2}). Adding and subtracting $\nabla \varphi(p_k)$ and making use of Assumption \ref{ass:kinetic_energy_function}.\ref{ass:Lipschitzcontinuous_gradient_phi} we get that
\begin{align*}
I_2 &\leq
\frac{L\alpha_k ^2}{2}\mathbb{E}_{\xi_k} \left[ \lVert  \nabla \varphi(p_{k+1}) - \nabla \varphi(p_{k})  \rVert_2^2 \right]
+ 
\frac{L\alpha_k ^2}{2}\mathbb{E}_{\xi_k} \left[ \lVert \nabla \varphi(p_{k}) \rVert_2^2 \right]
\\
&\leq 
\frac{L\lambda^2  \alpha_k ^2}{2}\mathbb{E}_{\xi_k} \left[ \lVert   p_{k+1} - p_{k}  \rVert_2^2 \right]
+ 
\frac{L\alpha_k ^2}{2} \lVert \nabla \varphi(p_{k}) \rVert_2^2
\\ & \leq
L\alpha_k ^4 \lambda^2 \mathbb{E}_{\xi_k} \left[ \lVert   \nabla f(q_k,\xi_k)   \rVert_2^2 \right]
+
\left(L\alpha_k ^4 \gamma ^2 \lambda^2 
+ 
\frac{L\alpha_k ^2}{2} \right) \lVert \nabla \varphi(p_{k}) \rVert_2^2,
\end{align*}
where we have used (\ref{eq:stochastic_Hamiltonian_descent}) and (\ref{eq:norm_squared_ineq}) in the last step.
Making use of Assumption \ref{ass:kinetic_energy_function}.\ref{ass:Lipschitzcontinuous_gradient_phi} again we obtain that
\begin{align*}
I_2&\leq
L\alpha_k ^4 \lambda^2 
 \left( \kappa (F(q_k) - F_*) + (1+\tau) \lVert \nabla F(q_k) \rVert_2^2 + \sigma^2
\right) \\
 &\quad+
\left(L\alpha_k ^3 \gamma ^2 \lambda^2 
+ 
\frac{L\alpha_k ^2}{2} \right) \lVert \nabla \varphi(p_{k}) \rVert_2^2.
\end{align*}
In a similar way, we find that
\begin{align*}
I_3' \leq 
\alpha_k^3 \gamma \lambda^2 
\left( \kappa (F(q_k) - F_*) + (1+\tau) \lVert \nabla F(q_k) \rVert_2^2 + \sigma^2
\right)
+ \alpha_k^3 \gamma^3 \lambda^2 \lVert \nabla \varphi(p_k) \rVert_2^2.
\end{align*}
Gathering up the terms, we get that
\begin{align*}
\begin{split}
  &\E_{\xi_k} [V_{k+1}] - V_k \\
  &\leq \left(\frac{\alpha_k^2  \lambda}{2}+\alpha_k^2  \lambda +L\alpha_k ^4 \lambda^2 +  \alpha_k^3 \gamma \lambda^2  \right) \sigma^2 
\\
& \quad +
\alpha_k^2 \kappa \lambda \left( \frac{3}{2} + L \alpha_k^2 \lambda + \alpha_k \gamma \lambda\right) 
(F(q_k) - F_*)
\\
& \quad + \biggl(\frac{\alpha_k^2  \lambda \tau}{2} +\alpha_k^2  \lambda(1 + \tau) +L\alpha_k ^4 \lambda^2 (1+ \tau) 
  +  \alpha_k^3 \gamma \lambda^2(1 + \tau)  \biggr) \lVert \nabla F(q_k)
\rVert_2^2
\\
& \quad + \left(
\alpha_k^2  \gamma ^2  \lambda 
+
L\alpha_k ^4 \lambda^2  \gamma^2
+ 
\frac{L\alpha_k ^2}{2}
+
\alpha_k^3 \gamma^3 \lambda^2  \right)\lVert \nabla \varphi(p_k) \rVert_2^2
 .
\end{split}
\end{align*}
Making use of Lemma \ref{lemma:smoothbound}, we see that
\begin{align*}
  &\E_{\xi_k}[ V_{k+1} ] - V_k \\
  &\leq \biggl(\frac{\alpha_k^2  \lambda}{2}+\alpha_k^2  \lambda +L\alpha_k ^4 \lambda^2 +  \alpha_k^3 \gamma \lambda^2  \biggr) \sigma^2 
\\
&\qquad +
(F(q_k) - F_*) \Biggl[ \alpha_k^2 \kappa \lambda \biggl( \frac{3}{2} + L \alpha_k^2 \lambda + \alpha_k \gamma \lambda\biggr) 
\\
&\qquad\quad + 2L  \biggl(\frac{\alpha_k^2  \lambda \tau}{2} +\alpha_k^2  \lambda(1 + \tau) +L\alpha_k ^4 \lambda^2 (1+ \tau) +  \alpha_k^3 \gamma \lambda^2(1 + \tau)  \biggr) \Biggr] 
\\
&\qquad + \biggl(
\alpha_k^2  \gamma ^2  \lambda 
+
L\alpha_k ^4 \lambda^2  \gamma^2
+ 
\frac{L\alpha_k ^2}{2}
+
\alpha_k^3 \gamma^3 \lambda^2  \biggr)(\varphi(p_k) - \varphi_*)
 .
\end{align*}
Now define
\begin{align*}
C_1(\alpha_k) = \sigma^2 \left(\frac{\alpha_k^2  \lambda}{2}+\alpha_k^2  \lambda +L\alpha_k ^4 \lambda^2 +  \alpha_k^3 \gamma \lambda^2  \right)
\end{align*}
and let $C_2(\alpha_k)$ be the maximum of the terms in front of $F(q_k) - F_*$ and $\varphi(p_k) - \varphi_*$.
It follows that
\begin{align}\label{eq:robbinssiegmundbound}
\mathbb{E}_{\xi_k}\left[ V_{k+1}\right] - V_k \leq
C_1(\alpha_k) +  C_2(\alpha_k)  V_k .
\end{align}
Since $C_1$ and $C_2$ only contain second-order terms of $\alpha_k$ (and by assumption $\sum_{k=1}^{\infty}\alpha_k^2 < \infty$), we have that
\begin{align*}
\sum_{k=0}^{\infty} C_1(\alpha_k) < \infty, \ \sum_{k=0}^{\infty} C_2(\alpha_k) < \infty.
\end{align*}
We can thus make use of the Robbins--Siegmund theorem with $\beta_k = C_2(\alpha_k)$, $X_k = C_1(\alpha_k)$ and $Y_k =0$ to conclude that $V_k$ tends to a non-negative, finite, random variable $V$ almost surely.
Since $F$ and $\varphi$ are assumed to be coercive, this implies that $\{p_k\}_{k\geq1}$ and $\{q_k\}_{k\geq1}$ are finite almost surely.
For the second claim of the proof, we define 
\begin{align*}
S_k = \frac{V_k}{\prod_{j=0}^{k-1}(1 +C_2(\alpha_j) )}.
\end{align*}
By (\ref{eq:robbinssiegmundbound}), we have that
\begin{align*}
\mathbb{E}_{\xi_k}\left[ S_{k+1}\right]  \leq
S_k + 
\frac{C_1(\alpha_k)}{\prod_{j=0}^{k}(1 + C_2(\alpha_j) )}
\leq 
S_k  +C_1(\alpha_k).
\end{align*}
Taking the expectation and, summing from $0$ to $K-1$, we see that
\begin{align*}
\mathbb{E}\left[ S_{K}\right]  \leq
S_0 + \sum_{k=0}^{K-1} C_1(\alpha_k).
\end{align*}
We multiply both sides of the previous inequality with $\prod_{k=0}^{K-1}(1 + C_2(\alpha_k) )$ 
\begin{align*}
\mathbb{E}\left[ V_{K}\right]  &\leq
\prod_{k=0}^{K-1}(1 + C_2(\alpha_k) )
\left(S_0 + \sum_{k=0}^{K-1} C_1(\alpha_k)\right)
\\
&\leq
e^{ \sum_{j=0}^{K-1}C_2(\alpha_k) )} \cdot
\left(S_0 + \sum_{k=0}^{K-1} C_1(\alpha_k)\right),
\end{align*}
where we have used the fact that $1+x \leq e^{ x}$ in the second step.
Letting $K$ tend to infinity on the left hand side and using the fact that $\sum_{k=0}^{\infty}C_2(\alpha_k) ) < \infty$ we see that the last claim of the theorem also holds:
\begin{align*}
\sup_{k} \mathbb{E}\left[ V_k\right] < \infty.
\end{align*}
\end{proof}
We now give a proof of Theorem~\ref{thm:pqbounded} in Setting 2, i.e.\ with Assumption~\ref{ass:setting_two}.
\begin{proof}[Proof of Theorem \ref{thm:pqbounded} in Setting 2]
By Assumption \ref{ass:setting_two}.\ref{ass:L0L1smoothness}
it holds that
\begin{align}\label{ineq:L0L10}
F(x_{k+1}) - F(x_k) \leq \langle \nabla F(x_k), x_{k+1} - x_k \rangle + 
\frac{L_0 + L_1 \lVert \nabla F(x_k) \rVert}{2} \lVert x_{k+1} - x_k \rVert^2
\end{align}
if $\lVert x_k - x_{k+1} \rVert \leq \frac{1}{L_1}$.
Since
\begin{align}\label{eq:qk}
q_{k+1} - q_k = \alpha_k \nabla \varphi(p_{k+1})
\end{align}
we get for large enough $k$ that
\begin{align}\label{ineq:boundnormdiffqk}
\lVert q_{k+1} - q_k \rVert = \alpha_k \lVert \nabla \varphi(p_{k+1}) \rVert
\leq \alpha_k  \Delta \leq \frac{1}{L_1},
\end{align}
by Assumption~\ref{ass:setting_two}.\ref{ass:bounded_clipping}.
If we insert (\ref{eq:qk}) into (\ref{ineq:L0L10}), we get
\begin{align*}
  F(q_{k+1}) - F(q_k) &\leq \langle \nabla F(q_k), q_{k+1} - q_k \rangle \\
                      &\quad+ 
\frac{L_0}{2} \lVert q_{k+1} - q_k \rVert^2
+
\frac{ L_1}{2}
\lVert \nabla F(q_k) \rVert \lVert q_{k+1} - q_k \rVert^2.
\end{align*}
By (\ref{ineq:boundnormdiffqk}), we get that
\begin{align*}
F(q_{k+1}) - F(q_k) \leq  \alpha_k \langle \nabla F(q_k), \nabla \varphi(p_{k+1}) \rangle + 
\frac{L_0}{2} \alpha_k^2   \Delta^2
+
\frac{ L_1}{2}
\lVert \nabla F(q_k) \rVert \alpha_k^2  \Delta^2.
\end{align*}
By Assumption \ref{ass:kinetic_energy_function}.\ref{ass:convexity} we have that
\begin{align*}
\varphi(p_{k+1}) - \varphi(p_k) &\leq \langle \nabla \varphi(p_{k+1}), p_{k+1}-p_k \rangle \\
&=
-
\alpha_k \langle \nabla \varphi(p_{k+1}), \nabla f(q_k,\xi_k) \rangle
-
\alpha_k \gamma \langle \nabla \varphi(p_{k+1}), \nabla \varphi(p_k) \rangle
\end{align*}
With $H(p,q) = F(q) + \varphi(p)$ and $V_k = H(p_k,q_k) - F_* - \varphi_*$
as in the previous proof we thus get that
\begin{align*}
V_{k+1} - V_k &\le \alpha_k \langle \nabla F(q_k), \nabla \varphi(p_{k+1}) \rangle + 
\frac{L_0}{2} \alpha_k^2  \Delta^2
+
\frac{\alpha_k^2 L_1}{2}
\lVert \nabla F(q_k) \rVert \Delta^2
\\
&-
\alpha_k \langle \nabla \varphi(p_{k+1}), \nabla f(q_k,\xi_k) \rangle
-
\alpha_k \gamma \langle \nabla \varphi(p_{k+1}), \nabla \varphi(p_k) \rangle,
\end{align*}
which can be rewritten as
\begin{align*}
  V_{k+1} - V_k &\le \alpha_k \langle \nabla F(q_k)- \nabla f(q_k,\xi_k), \nabla \varphi(p_{k+1}) \rangle \\
  &\quad+ 
\frac{L_0}{2} \alpha_k^2  \Delta^2
+
\frac{\alpha_k^2 L_1}{2}
\lVert \nabla F(q_k) \rVert \Delta^2
    -
\alpha_k \gamma \langle \nabla \varphi(p_{k+1}), \nabla \varphi(p_k) \rangle.
\end{align*}
We add and subtract $\nabla \varphi(p_k)$ in the first scalar product:
\begin{align}\label{lyapunovbound}
\begin{split}
  V_{k+1} - V_k &\le \alpha_k \langle \nabla F(q_k)- \nabla f(q_k,\xi_k), \nabla \varphi(p_{k+1}) - \nabla \varphi(p_k)\rangle \\
  &\quad+ 
\alpha_k \langle \nabla F(q_k)- \nabla f(q_k,\xi_k),   \nabla \varphi(p_k)\rangle
\\
&\quad +
\frac{L_0}{2} \alpha_k^2  \Delta^2
+
\frac{\alpha_k^2 L_1}{2}
\lVert \nabla F(q_k) \rVert \Delta^2
-
\alpha_k \gamma \langle \nabla \varphi(p_{k+1}), \nabla \varphi(p_k) \rangle.
\end{split}
\end{align}
The second scalar product disappears due to the unbiasedness of $\nabla f(q_k,\xi_k)$ and the fact that $\xi_k$ is independent of $q_k$ and $p_k$. 
We now focus on the first scalar product in (\ref{lyapunovbound}). Taking the conditional expectation and using Cauchy--Schwarz inequality, we see that 
\begin{align*}
&\alpha_k \mathbb{E}_{\xi_k}\left[  \langle \nabla F(q_k)- \nabla f(q_k,\xi_k), \nabla \varphi(p_{k+1}) - \nabla \varphi(p_k)\rangle  \right]
\\
& \qquad
\leq 
\alpha_k \mathbb{E}_{\xi_k}\left[ \lVert \nabla F(q_k)- \nabla f(q_k,\xi_k)\rVert \lVert  \nabla \varphi(p_{k+1}) - \nabla \varphi(p_k)\rVert \right]
\end{align*}
Using the Lipschitz continuity of $\nabla \varphi$, this can be further bounded by
\begin{align}\label{eq:boundtobereusedinsetting3}
\begin{split}
&\alpha_k^2  \lambda  \mathbb{E}_{\xi_k}\left[ \lVert \nabla F(q_k)- \nabla f(q_k,\xi_k)\rVert\lVert  \nabla f(q_k,\xi_k) - \gamma \nabla \varphi(p_k) \rVert \right]
\\
& \qquad \leq
\alpha_k^2 \lambda \mathbb{E}_{\xi_k}\left[ \lVert \nabla F(q_k)- \nabla f(q_k,\xi_k)\rVert  \left(\lVert  \nabla f(q_k,\xi_k) \rVert + \gamma \lVert \nabla \varphi(p_k) \rVert \right) \right].
\end{split}
\end{align}
We now add and subtract $\nabla F(q_k)$ inside the  $\lVert  \nabla f(q_k,\xi_k) \rVert$-term and make use of the triangle inequality to bound the previous expression by
\begin{align*}
&
\alpha_k^2 \lambda \mathbb{E}_{\xi_k}\left[ \lVert \nabla F(q_k)- \nabla f(q_k,\xi_k)\rVert  \left(\lVert   \nabla F(q_k) - \nabla f(q_k,\xi_k) \rVert + \lVert \nabla F(q_k) \rVert + \gamma \Delta \right) \right]
\\
& \quad
=
\alpha_k^2 \lambda \mathbb{E}_{\xi_k}\left[ \lVert \nabla F(q_k)- \nabla f(q_k,\xi_k)\rVert^2   + \lVert   \nabla F(q_k) - \nabla f(q_k,\xi_k) \rVert \lVert \nabla F(q_k) \rVert \right.
\\
 & \qquad + \left. \gamma \Delta \lVert   \nabla F(q_k) - \nabla f(q_k,\xi_k) \rVert  \right]
\end{align*}
where we have also used the assumption that $\lVert \nabla \varphi(p_k) \rVert \leq \Delta$.
Now, the first term can by Assumption \ref{ass:setting_two}.\ref{ass:variance} be bounded by
\begin{align*}
\mathbb{E}_{\xi_k}\left[ \lVert \nabla F(q_k)- \nabla f(q_k,\xi_k)\rVert^2  \right] \leq \sigma^2.
\end{align*}
By Remark \ref{remark:lyapunov}, we can bound the second term as follows
\begin{align*}
\mathbb{E}_{\xi_k}\left[  \lVert   \nabla F(q_k) - \nabla f(q_k,\xi_k) \rVert \lVert \nabla F(q_k) \rVert \right] \leq
\sigma \lVert \nabla F(q_k) \rVert,
\end{align*}
since $\xi_k$ is independent of $\lVert \nabla F(q_k) \rVert$. Likewise, we can bound the last expectation by $\sigma$. Thus, we arrive at the bound
\begin{equation}\label{boundfirstinnerproduct}
\begin{aligned}
\alpha_k &\mathbb{E}_{\xi_k}\left[  \langle \nabla F(q_k)- \nabla f(q_k,\xi_k), \nabla \varphi(p_{k+1}) - \nabla \varphi(p_k)\rangle  \right]\\
&\leq 
\alpha_k^2 \lambda \left(
\sigma^2 + \sigma \lVert \nabla F(q_k) \rVert + \gamma \Delta \sigma
\right).
\end{aligned}
\end{equation}

We can bound the last inner product of (\ref{lyapunovbound}) using Lemma B.1 in \cite{zhang_2020} with $\mu = 0$:
\begin{align*}
&-\alpha_k \gamma \langle \nabla \varphi(p_{k+1}), \nabla \varphi(p_k) \rangle \\
&\quad\leq
- \alpha_k \gamma \lVert \nabla \varphi(p_k) \rVert^2 
+ 
\alpha_k \gamma \lVert \nabla \varphi(p_{k+1}) - \nabla \varphi(p_k) \rVert \lVert \nabla \varphi(p_k) \rVert
\\
&\quad\leq
\alpha_k^2 \gamma 
\lambda 
2 \lVert \nabla f(q_k,\xi_k) - \gamma \nabla \varphi(p_k) \rVert \Delta.
\end{align*}
By Remark \ref{remark:lyapunov} we thus get 
\begin{align}\label{boundlastinnerproduct}
\begin{split}
&-\alpha_k \gamma \mathbb{E}_{\xi_k}\left[ \langle \nabla \varphi(p_{k+1}), \nabla \varphi(p_k) \rangle \right] \\
&\qquad \leq
\alpha_k^2 \gamma 
\lambda 
 \Delta
 \left( \mathbb{E}_{\xi_k}\left[ \lVert \nabla f(q_k,\xi_k) \rVert \right] + \gamma \lVert \nabla \varphi(p_k) \rVert
\right)
\\
&\qquad
\leq
\alpha_k^2 \gamma \lambda  \Delta
\big(
\mathbb{E}_{\xi_k}\left[ \lVert \nabla f(q_k,\xi_k) - \nabla F(q_k) \rVert \right]
+ \lVert \nabla F(q_k) \rVert_2 + \gamma \lVert \nabla \varphi(p_k) \rVert \big)
\\
&\qquad 
\leq
\alpha_k^2 \gamma 
\lambda 
 \Delta
\left( \sigma + \lVert \nabla F(q_k) \rVert_2 + \gamma \Delta
\right).
\end{split}
\end{align}
Inserting (\ref{boundfirstinnerproduct}) and (\ref{boundlastinnerproduct})
into (\ref{lyapunovbound}), we get that
\begin{align*}
  \mathbb{E}_{\xi_k}\left[ V_{k+1} \right] - V_k
  &\le
\alpha_k^2 \lambda \left(
\sigma^2 + \sigma \lVert \nabla F(q_k) \rVert + \gamma \Delta \sigma
\right)+
\frac{L_0}{2} \alpha_k^2  \Delta^2
\\
&\quad+
\frac{\alpha_k^2 L_1}{2}
\lVert \nabla F(q_k) \rVert \Delta^2
+
\alpha_k^2 \gamma 
\lambda 
 \Delta
\left( \sigma + \lVert \nabla F(q_k) \rVert_2 + \gamma \Delta
\right)
\end{align*}
By Lemma \ref{lemma:smoothbound}, we can bound the $\lVert \nabla F(q_k) \rVert-$terms, and obtain the bound
\begin{align*}
  \mathbb{E}_{\xi_k}\left[ V_{k+1} \right] - V_k
  &\le
\alpha_k^2 \lambda \left(
\sigma^2 + \sigma \left( 2 L_1(F(q) - F_*)  +    \frac{ L_0}{L_1} \right) + \gamma \Delta \sigma
\right)+
\frac{L_0}{2} \alpha_k^2  \Delta^2
\\
&\quad+
\frac{\alpha_k^2 L_1}{2}
\left( 2 L_1(F(q) - F_*)  +    \frac{ L_0}{L_1} \right)  \Delta^2
\\
&\quad+
\alpha_k^2 \gamma 
\lambda 
 \Delta
\left( \sigma +\left( 2 L_1(F(q) - F_*)  +    \frac{ L_0}{L_1} \right) + \gamma \Delta
\right).
\end{align*}
We now define
\begin{align*}
C_1(\alpha_k) &=
2\alpha_k^2 \lambda \sigma L_1 + \alpha_k^2 L_1^2 \Delta^2
+
2\alpha_k^2 \gamma \lambda \Delta L_1,\\
C_2(\alpha_k) &= \alpha_k^2 \lambda \sigma^2 
+ \alpha_k^2 \lambda \sigma \frac{L_0}{L_1}
+ \alpha_k^2 \lambda \gamma \Delta \sigma
+ \alpha_k^2 L_0 \Delta^2 \\
&\quad
+ \alpha_k^2 \gamma \lambda \Delta \sigma
+ \alpha_k^2 \gamma \lambda \Delta \frac{L_0}{L_1}
+ \alpha_k^2 \gamma^2 \lambda \Delta^2 
.
\end{align*}
We see that
\begin{align}\label{eq:Hksettingtwo}
\begin{split}
 \mathbb{E}_{\xi_k}\left[ V_{k+1} \right] - V_k
 &\leq 
 C_1 (\alpha_k) (F(q_k) - F_*) + C_2( \alpha_k)
 \\
 &\leq
  C_1 (\alpha_k) (H(p_k,q_k) - F_* - \varphi_*) + C_2( \alpha_k),
  \end{split}
\end{align}
where we have used the fact that $\varphi(p_k) - \varphi_* \geq 0$.
Since $\sum_{k \geq 0} C_i(\alpha_k) < \infty$ for $i=1,2$, we can appeal to the Robbins--Siegmund theorem to conclude that $\lim_{k \to \infty} V_k$ exists and is finite almost surely. Since $F$ and $\varphi$ are coercive this implies that $\sup_k \lVert p_k \rVert < \infty$ and $\sup_k \lVert q_k \rVert < \infty$ almost surely.

\end{proof}

\begin{proof}[Proof of Theorem \ref{thm:pqbounded} in Setting 3]
Let $F_*$ be as in Lemma \ref{lemma:asboundstochasticgradient}. By Lemma \ref{lemma:asboundstochasticgradient}, we have that the right-hand side of (\ref{eq:boundtobereusedinsetting3}) can be bounded by
\begin{align*}
\alpha_k^2 \lambda \mathbb{E}_{\xi_k}\left[ \lVert \nabla F(q_k)- \nabla f(q_k,\xi_k)\rVert  \right] \left(2L_1 N (F(q_k) - F_*) + \gamma \Delta \right).
\end{align*}
By Assumption \ref{ass:setting_three}.\ref{ass:heavytailednoise} this can in its turn be bounded by
\begin{align*}
\alpha_k^2 \lambda \sigma \left(2L_1 N (F(q_k) - F_*) + \gamma \Delta \right).
\end{align*}
The rest of the proof proceeds exactly like that of Setting 2 (with suitable modifications of the constants in the bound (\ref{eq:Hksettingtwo})).
Similarly to the proof in Setting 1, we can also define 
\begin{align*}
S_k = \frac{V_k}{\prod_{j=0}^{k-1}(1 +C_2(\alpha_j) )}
\end{align*}
and in a similar fashion obtain that $\sup_k \mathbb{E}\left[ F(q_k) \right] < + \infty$.
\end{proof}

\subsection{Almost sure convergence, notation}
To prove convergence, we start by rewriting the processes (\ref{def:PkQk}) on a form that is more reminiscent of the integral equations (\ref{eq:nearly_Hamiltonian_system_integral}).
As in \citet{kushner_yin_2003}, we use the convention that
\begin{align*}
\begin{split}
\sum_{i=n}^k a_i&= 0, \ \text{ if } k=n-1 \text{ (the empty sum)},\\
\sum_{i=n}^{k} a_i &= -\sum_{i=k+1}^{n-1}a_i, \ \text{ if } k < n-1.
\end{split}
\end{align*}
By introducing the function 
\begin{align}\label{def:m}
m(t) =
\begin{cases} j, \ t_j \leq t < t_{j+1},\\
0, \ t \leq 0,
\end{cases}
\end{align}
we can write (\ref{def:PkQk}) as
\begin{align}\label{identity:PkQk}
\begin{split}
P_k(t) &= p_k + \sum_{i=k}^{m(t_k + t)-1} (p_{i+1} - p_i), \\
Q_k(t) &= q_k + \sum_{i=k}^{m(t_k + t)-1} (q_{i+1} - q_i).
\end{split}
\end{align}
Using the fact that $p_k = P_k(0)$ and $q_k = Q_k(0)$, along with the update (\ref{eq:stochastic_Hamiltonian_descent}), we can rewrite (\ref{identity:PkQk}) as
\begin{align}\label{identity:PkQk_mk}
\begin{split}
P_k(t) &= P_k(0) - \sum_{i=k}^{m(t_k + t)-1} \alpha_i \nabla F(q_i) + M_k(t) - \gamma  \sum_{i=k}^{m(t_k + t)-1} \alpha_i \nabla \varphi(p_i), \\
Q_k(t) &= Q_k(0) + \sum_{i=k}^{m(t_k + t)-1} \alpha_i \nabla \varphi(p_{i+1}),
\end{split}
\end{align}
where
\begin{align*}
M_k(t) = \sum_{i=k}^{m(t_k + t)-1} \alpha_i \delta M_i
\end{align*}
and  $\delta M_i = \nabla f(q_i,\xi_i) - \nabla F(q_i)$.

In the next section, we show that the process $\{M_k\}_{k\geq0}$ converges uniformly on compact sets, almost surely, to $0$.

\subsection{Convergence of the sequence $\{M_k\}$}\label{sec:Mk_converges}
The following lemma is an adaptation of part 1 of the proof of Theorem 2.1 from \citet{kushner_yin_2003}:
\begin{lemma}[Convergence of $\{M_k(t)\}_{k\geq 0}$]\label{lemma:convergence_of_Mk}
Suppose that Assumption \ref{ass:basic}, \ref{ass:kinetic_energy_function} and \ref{ass:step_size} holds, along with either Assumption \ref{ass:setting_one}, \ref{ass:setting_two} or \ref{ass:setting_three}.
Then, the sequence $\{M_k(t)\}_{k\geq 0}$ converges uniformly on compact sets almost surely to $0$.
More precisely, for any $T$ it holds that
\begin{align}\label{eq:convergence_of_Mk}
\lim_{k \to \infty} \sup_{t \in [0,T]} \lVert M_k(t) \rVert_2 = 0,
\end{align}
almost surely.
\end{lemma}
\begin{proof}[Proof of Lemma \ref{lemma:convergence_of_Mk}]
Closely following the proof of Theorem 2.1 in \citet{kushner_yin_2003}:
We let $\mathcal{F}_j = \sigma(\xi_1, \dots, \xi_j)$.
By definition, we have that
\begin{align*}
M_k(t)= \sum_{i=k}^{m(t_k+t)-1} \alpha_i \delta M_i,
\end{align*}
where $\delta M_i = \nabla f(q_i,\xi_i) - \nabla F(q_i)$.
Define 
\begin{align*}
\tilde{M}_j = \sum_{i=k}^{j} \alpha_i \delta M_i.
\end{align*}
We will show that $\tilde{M}_j$ is a martingale sequence. We first note that 
\begin{align*}
\mathbb{E} \left[ \tilde{M}_{j+1} | \mathcal{F}_j \right] = \tilde{M}_j,
\end{align*}
by Assumption \ref{ass:basic}.\ref{ass:unbiased} and the fact that the noise is independent. Next, we demonstrate that
\begin{align}\label{eq:Mjtildeismartingale}
\mathbb{E} \left[ \lVert \tilde{M}_{j+1} \rVert_2 \right] < \infty,
\end{align}

Note that
\begin{align*}
\mathbb{E} \left[
\lVert \tilde{M}_l \rVert_2^2 
\right]
&= 
\mathbb{E} \left[
\bigg\lVert\sum_{i=k}^{l} \alpha_i \delta M_i \bigg\rVert_2^2
\right]\\
&= 
\mathbb{E} \left[
\sum_{i=k}^{l} \alpha_i^2 \lVert \delta M_i \rVert_2^2
+
2 \sum_{i=k}^{l} \sum_{j=k}^{i-1} \alpha_i \alpha_j \langle \delta M_i 
, \delta M_j \rangle
\right] \\
&= 
\mathbb{E} \left[
\sum_{i=k}^{l} \alpha_i^2 \lVert \delta M_i \rVert_2^2
\right],
\end{align*}
where we have used the fact that for $j < i$ 
\begin{align*}
\mathbb{E}\left[\langle \delta M_i
, \delta M_j \rangle \right]
=
\mathbb{E}\left[ \mathbb{E}\left[ \langle \delta M_i
, \delta M_j \rangle | \mathcal{F}_j \right]\right]
=
\mathbb{E}\left[ \langle  \mathbb{E}\left[ \delta M_i  | \mathcal{F}_j \right]
, \delta M_j \rangle\right]
=0,
\end{align*}
since $M_j$ is $\mathcal{F}_j$-measurable and $ \mathbb{E}\left[ \delta M_i  | \mathcal{F}_j \right] = 0$ (recall that $\xi_i$ is independent of $\mathcal{F}_j$).
In the case that Assumption \ref{ass:setting_two}.\ref{ass:variance} holds we therefore have that
\begin{align}\label{eq:martingaledifferencebound}
\mathbb{E} \left[\lVert \delta M_i \rVert_2^2 
\right] < \infty.
\end{align}
Under Assumption \ref{ass:setting_three} we get that 
\eqref{eq:martingaledifferencebound} holds by Lemma \ref{lemma:2ndmomentsetting3}.
If instead Assumption \ref{ass:setting_one}.\ref{ass:expectedsmoothness} holds, we have that
\begin{align*}
\mathbb{E} \left[\lVert \delta M_{i+1} |\mathcal{F}_i \rVert_2^2 
\right] \leq \kappa(F(q_k) - F_*) + (1+\tau) \lVert \nabla F(q_k)\rVert_2^2 + \sigma^2.
\end{align*}
Under Assumption \ref{ass:setting_one}.\ref{ass:Lipschitzcontinuous_gradient} or \ref{ass:setting_two}.\ref{ass:L0L1smoothness} we get from Theorem \ref{thm:pqbounded} that the expectation of the right-hand side is finite\footnote{Under assumption \ref{ass:setting_one}.\ref{ass:Lipschitzcontinuous_gradient} we can use Lemma \ref{lemma:smoothbound} to bound the gradient with $2L(F(q_k) - F_*)$ which is bounded in expectation by Theorem \ref{thm:pqbounded}. 
}, in which case (\ref{eq:martingaledifferencebound}) also holds. Hence $\tilde{M}_j$ satisfies (\ref{eq:Mjtildeismartingale}) and it is thus a martingale.

We now show that (\ref{eq:convergence_of_Mk}) holds.
For any interval $[0,T]$, we have that
\begin{align*}
\sup_{t \in [0,T]} \lVert M_k(t)\rVert_2 = \sup_{k \leq j \leq l}\lVert \tilde{M}_j \rVert_2,
\end{align*}
where $l = m(t_k+ T)$. By \emph{Doob's submartingale inequality} \citep{kushner_yin_2003, williams_probability}, we have for every $\mu >0$ that
\begin{align*}
\mathbb{P}\left( \sup_{k \leq j \leq l}\lVert \tilde{M}_j \rVert_2 \geq \mu \right) 
\leq \frac{\mathbb{E}\left[ \lVert \tilde{M}_l \rVert_2^2 \right]}{\mu^2},
\end{align*}
which implies that
\begin{align*}
\mathbb{P}\left( \sup_{k \leq j }\lVert \tilde{M}_j \rVert_2 \geq \mu \right) 
\leq
 C\sum_{i=k}^{\infty} \alpha_i^2 
\end{align*}
and hence 
\begin{align*}
\lim_{k \to \infty}
\mathbb{P}\left( \sup_{k \leq j }\lVert \tilde{M}_j \rVert_2 \geq \mu \right) 
=0.
\end{align*}
By Theorem~1 in Section 2.10.3 of \citet{shiryaev_2016}, the sequence $\tilde{M}_j$ converges almost surely to $0$, 
i.e.\ there is a set $U$ such that $\mathbb{P}(U)=0$ and for every $\omega \in U^c$ 
we have that (\ref{eq:convergence_of_Mk}) holds.
\end{proof}


\subsection{Equicontinuity of the sequences $\{P_k\}_{k \geq 0}$ and $\{Q_k\}_{k \geq 0}$}\label{sec:PkQk_is_equicontinuous}
\lemmaPkQkequicontinuous*
To show Lemma \ref{lemma:PkQk_equicontinuous}, we make use of an equivalent definition of extended equicontinuity:
\begin{lemma}[Equivalent definition of extended continuity]\label{lemma:equivdefequicont}
A sequence of functions $\{f_k\}_{k \geq 0}$, $f_k: \R \to \R^d$  is  \emph{equicontinuous in the extended sense}
if and only if $\{|f_k(0)|\}_{k \geq 0}$ is bounded and for every $T$ and $\epsilon >0$ there is a null sequence $\left(a_k \right)_{k\geq 0}$ (that is, $\lim_{k \to \infty} a_k = 0$) such that
\begin{align}\label{nullseq}
 \sup_{0<|t-s| \leq \delta, \ t,s \in [0,T]} |f_k(t) - f_k(s)| \leq \epsilon + a_k.
\end{align}
\end{lemma}
\begin{proof}[Proof of Lemma \ref{lemma:equivdefequicont}]
By definition, (\ref{limsup}) is equal to
\begin{align*}
\lim_{k\to \infty} b_k \leq \epsilon
\end{align*}
with 
\begin{align*}
b_k := \sup_{j \geq k} \sup_{0<|t-s| \leq \delta, \ t,s \in [0,T]} |f_j(t) - f_j(s)|.
\end{align*}
Define
\begin{align*}
a_k = \max \{0,b_k - \epsilon \}.
\end{align*}
Then $\left(a_k \right)_{k\geq 0}$ satisfies all the requirements; $a_k$ is clearly positive and by  continuity of the function $\max \{0,x\}$ it holds that
\begin{align*}
 \lim_{k \to \infty} a_k  
 =
 \max \{0,  \lim_{k \to \infty}  b_k - \epsilon \} 
=0,
\end{align*}
as $\lim_{k \to \infty}  b_k - \epsilon \leq 0$.
Furthermore, we have that
\begin{align*}
 \sup_{0<|t-s| \leq \delta, \ t,s \in [0,T]} |f_k(t) - f_k(s)| \leq b_k \leq \epsilon + a_k,
\end{align*}
for every $k$ and thus we have shown that (\ref{nullseq}) follows from (\ref{limsup}).
We now show the converse.
Suppose that (\ref{nullseq}) holds. Taking the supremum of (\ref{nullseq}) we obtain
\begin{align*}
\sup_{j\geq k} \sup_{0<|t-s| \leq \delta, t,s \in [0,T]} |f_j(t) - f_j(s)| \leq \epsilon + \sup_{j\geq k}  a_j.
\end{align*}
We finally take the limit with respect to $k$
\begin{align*}
\lim_{k \to \infty} \sup_{j\geq k} \sup_{0<|t-s| \leq \delta, \ t,s \in [0,T]} |f_j(t) - f_j(s)| \leq \epsilon +  \lim_{k \to \infty} \sup_{j\geq k}  a_j
=: \epsilon +  \limsup_{k \to \infty}  a_k.
\end{align*}
But as $\lim_{k \to \infty}a_k$ exists by assumption and equals $0$, $ \limsup_{k \to \infty}  a_k=\lim_{k\to \infty}a_k=0$. We thus conclude that (\ref{limsup}) holds.
\end{proof}
We now turn to the proof of Lemma \ref{lemma:PkQk_equicontinuous}.
\begin{proof}[Proof of Lemma \ref{lemma:PkQk_equicontinuous}]
Closely following Lemma~2 in \citet{freise}:
We want to show that the sequence $\{Z_k\}_{k \geq 0}=\{(P_k,Q_k)\}_{k \geq 0}$, where $\{P_k\}_{k \geq 0}$ and $\{Q_k\}_{k \geq 0}$ are defined by (\ref{def:PkQk}), is equicontinuous in the extended sense.

First, we note that the sequences $\{P_k(0)\}$ and $\{Q_k(0)\}$ are finite except on a set of measure $0$, since by Theorem~\ref{thm:pqbounded} $\sup_k \lVert p_k \rVert_2 < \infty$ and $\sup_k \lVert q_k \rVert_2 < \infty$ almost surely. 

By Lemma \ref{lemma:equivdefequicont} an equivalent characterization of extended equicontinuity is that for every $\epsilon>0$, there is a sequence $\{a_k \}_{k \geq 0}$ such that $\lim_{k \to \infty} a_k = 0$ and a $\delta >0$ such that
\begin{align}\label{eq:Zk_equicont}
\sup_{|t-s|< \delta, \ t,s \in [0,T]}\lVert Z_k(t) - Z_k(s) \rVert_{\ell^2\left(\R^{2d}\right)} \leq \epsilon + a_k, \ \text{a.s.}
\end{align}
By (\ref{identity:PkQk_mk}), we have that
\begin{align}\label{eq:Pk_C_bound}
\lVert P_k(t) -P_k(s) \rVert_2 \leq C(\omega) \sum_{i=m(t_k+s)}^{m(t_k+t)-1}  \alpha_i  + \lVert M_k(t)\rVert_2 + \lVert M_k(s)\rVert_2,
\end{align}
where $C(\omega) = \sup_{i} \lVert \nabla F(q_i ) - \gamma \nabla \varphi(p_i) \rVert_2$.
By the boundedness of $p_k$ and $q_k$ along with the continuity of $\nabla F$ and $\nabla \varphi$, we have that $C(\omega) < \infty$, a.s.
The sum on the right-hand side of (\ref{eq:Pk_C_bound}) can be rewritten as
\begin{align*}
\sum_{i=m(t_k+s)}^{m(t_k+t)-1} \alpha_i
= 
t_{m(t_k+t)}- t_{m(t_k+s)}.
\end{align*}
By definition of $m$, (\ref{def:m}), we have that
\begin{align}\label{ineq:ts1}
t_{m(t_k+t)}- t_{m(t_k+s)}
\leq t_k +t -t_{m(t_k+s)}.
\end{align}
But we also have 
\begin{align*}
t_{m(t_k+s)} \leq t_k +s < t_{m(t_k+s)+1},
\end{align*}
and hence the right-hand side of (\ref{ineq:ts1}) can be rewritten and
bounded as follows
\begin{align*}
 t_k +t -(t_k+s) + (t_k+s) - t_{m(t_k+s)} 
 \leq (t-s) + t_{m(t_k+s)+1} - t_{m(t_k+s)}.
\end{align*}
Now, $t_{m(t_k+s)+1} - t_{m(t_k+s)} = \alpha_{m(t_k+s)+1}$ and hence we see that
\begin{align*}
\lVert P_k(t) -P_k(s) \rVert_2 \leq  C(\omega)  \left(|t-s| + \alpha_{m(t_k+s)+1} \right) + \lVert M_k(t) \rVert_2 + \lVert M_k(s) \rVert_2  .
\end{align*}
Let $\epsilon$ be greater than $0$. There are now two cases. If $C(\omega)=0$, (\ref{nullseq}) clearly holds for any $\delta>0$. If $C(\omega) \neq 0$, then take $\delta>0$ so small that $C(\omega) \delta < \epsilon$. We then have
\begin{align*}
  &\sup_{|t-s|<\delta, \ t,s \in [0,T]} \lVert P_k(t) -P_k(s) \rVert_2 \\
  &\quad\leq  \sup_{|t-s|<\delta, t,s \in [0,T]} \left( C(\omega)(|t-s| + \alpha_{m(t_k+s)+1}) + \lVert M_k(t) \rVert_2 + \lVert M_k(s) \rVert_2 \right)
\\
&\quad< \epsilon
+
C(\omega) \alpha_{m(t_k)+1} + 2 \lVert M_k(T) \rVert_2.
\end{align*}
By Lemma \ref{lemma:convergence_of_Mk}, $\lim_{k \to \infty}\lVert M_k(T)\rVert_2=0$ a.s.\ and
we see that (\ref{nullseq}) in Lemma \ref{lemma:equivdefequicont} holds almost surely.
A similar argument yields an analogous bound for $Q_k$, and by the equivalence of norms on $\R^{2d}$, we obtain (\ref{eq:Zk_equicont}).
\end{proof}

In the next section, we show that the processes $\{P_k\}_{k\geq 0}$ and $\{Q_k\}_{k\geq 0}$ can be written as solutions to the integral equations  corresponding to (\ref{eq:nearly_Hamiltonian_system}), plus terms that converge uniformly on compact sets to $0$ as $k \to \infty$.
\subsection{Asymptotic solution}\label{appendix:asymptotic_solution}
\lemmaPkQkasymptoticsolutions*
\begin{proof}[Proof of Lemma \ref{lemma:PkQk_asymptotic_solutions}]
We start by showing that the sum 
\begin{align*}
-\sum_{i=k}^{m(t_k+t)-1}  \alpha_i \nabla F(q_i )
\end{align*}
in Equation (\ref{identity:PkQk_mk})
can be rewritten as 
\begin{align*}
-\int_0^t \nabla F (Q_{k}(s))\mathrm{d}s + \mu_{1,k}(t),
\end{align*}
where $Q_k(t)$ is defined by (\ref{def:PkQk}) and $\{\mu_{1,k}\}_{k \geq 0}$ is a sequence of functions that tends to 0 uniformly on compact intervals.
Consider
\begin{align*}
I_k := -\int_0^t \nabla F (Q_{k}(s)) \mathrm{d}s .
\end{align*}
Then, since $t_k + s$ belongs to a single interval $[t_i,t_{i+1})$,
\begin{align*}
I_k &= -\int_0^t \left( \nabla F (q_0) I_{(-\infty,t_0)}(t_k+s) - \sum_{i=0}^{\infty} \nabla F (q_i) I_{[t_i,t_{i+1})}(t_k +s) \right) \mathrm{d}s \\
&= -\int_0^t \left( \nabla F(q_0) I_{(-\infty,t_0-t_k)}(s) - \sum_{i=0}^{\infty} \nabla F(q_i) I_{[t_i-t_k,t_{i+1}-t_k)}(s) \right) \mathrm{d}s.
\end{align*}
The term $t_0-t_k$ is always less than or equal to $0$. Hence the first term disappears as we are integrating from $0$ to $t$. 
For $i<k$, we have that $t_{i+1}-t_k \leq 0$. We can therefore start the sum at $i=k$, as earlier terms will not contribute to the integral.
Thus, 
\begin{align*}
I_k = - \int_0^t \left( \sum_{i=k}^{\infty} \nabla F(q_i) I_{[t_i-t_k,t_{i+1}-t_k)}(s) \right) \mathrm{d}s.
\end{align*}
Now suppose that $t_j - t_k\leq t < t_{j+1}-t_k$. We split up the previous integral as follows:
\begin{align*}
  I_k &= -\int_0^{t_j-t_k} \left( \sum_{i=k}^{j-1} \nabla F(q_i) I_{[t_i-t_k,t_{i+1}-t_k)}(s) \right) \mathrm{d}s \\
  &\quad- \int_{t_j-t_k}^t \nabla F(q_k) I_{[t_j-t_k,t_{j+1}-t_k)}(s)  \mathrm{d}s
\\
&=-\sum_{i=k}^{j-1} \nabla F(q_i) \alpha_i
-
 \nabla F(q_j)\left( t- t_j + t_k \right),
\end{align*}
where we have used that $\int_0^{t_j-t_k}  I_{[t_i-t_k,t_{i+1}-t_k)}(s) ds = \alpha_i$.
Using the fact that $m(t+ t_k) = j$, we can rewrite this further as
\begin{align*}
I_k = -\sum_{i=k}^{m(t_k+t)-1} \nabla F(q_i) \alpha_i
-
\mu_{1,k}(t),
\end{align*}
where $\mu_{1,k}(t) =  \nabla F (q_{m(t_k+t)})\left( t- t_{m(t_k+t)} + t_k \right)$.
The function $\mu_{1,k}$ is piecewise linear and $0$ at $t=t_j - t_k$. 
Since $\sup_k \lVert q_k \rVert_2 < \infty$ a.s. by Theorem~\ref{thm:pqbounded}, the set $\{q_k\}$ can be enclosed in a compact set. As $\nabla F$ is continuous in each of the settings that we consider, a positive random variable $C(\omega)$ which satisfies
\begin{align*}
  \lVert\nabla F(q_{m(t_k+t)})\rVert_2 \leq C(\omega) < \infty, \ \text{a.s.},
\end{align*}
thus exists.
Hence, it holds that
\begin{align*}
 \lVert \mu_{1,k}(t) \rVert_2
\leq C(\omega) | \alpha_{m(t_k+t)} |.
\end{align*}
Now, we have that $\lim_{k \to \infty }\sup_{t \in [0,T]} \alpha_{m(t_k+t)} = 0$ for any fixed $T$, since $\lim_{k \to \infty} \alpha_k = 0$, and thus $\mu_{1,k}$ converges to $0$ uniformly on  compact intervals.

In a similar fashion we obtain for the last term in (\ref{identity:PkQk_mk}) that
\begin{align*}
-\sum_{i=k}^{m(t_k+t)-1}  \alpha_i \nabla \varphi(p_i )
=
-\int_0^t \nabla \varphi (P_k(s))\mathrm{d}s + \mu_{2,k}(t),
\end{align*}
where $\{\mu_{2,k}\}_{k\geq 0}$ converges uniformly on compact sets to $0$.
Letting $\mu_k = \mu_{1,k} + \gamma \mu_{2,k}$, we obtain the expression in the first line of (\ref{eq:PkQk_asymptotic_solutions}).

We now turn our attention to the second line of (\ref{eq:PkQk_asymptotic_solutions}). By an argument analogous to the previous, we can write the second line of (\ref{identity:PkQk_mk}) as 
\begin{align*}
Q_k(t) = Q_k(0) + \int_0^t \varphi(P_{k+1}(s))\mathrm{d}s + \nu_k(t),
\end{align*}
where $\nu_k$ converges uniformly on compact sets to $0$.
We can rewrite the integral on the right-hand side as
\begin{align*}
\int_0^ t \nabla \varphi(P_{k+1}(s)) \mathrm{d}s 
= 
\underset{:=\kappa_k(t)}{\underbrace{\int_0^ t \nabla \varphi(P_{k+1}(s)) -
 \nabla \varphi(P_k(s)) \mathrm{d}s}}
+
\int_0^ t \nabla \varphi(P_k(s))\mathrm{d}s.
\end{align*}
By the Lipschitz continuity of $\nabla \varphi$ and the definition of $P_k$ (\ref{def:PkQk}),
\begin{align*}
\lVert \kappa_k(t) \rVert_2
\leq
\int_0^ t \lambda \lVert P_{k+1}(s) -
 P_k(s) \rVert_2 \mathrm{d}s
 =
 \int_0^ t \lambda \lVert P_k(\alpha_{k} + s) -
 P_k(s) \rVert_2 \mathrm{d}s,
\end{align*}
Since $\{P_k(t)\}_{k\geq 0}$ is equicontinuous in the extended sense by Lemma \ref{lemma:PkQk_equicontinuous}, there is for each $T$ and $\epsilon>0$ a $\delta>0$ such that
 \begin{align}\label{eq:Pkequicont}
\limsup_{k \to \infty} \sup_{|t-s|< \delta, \ t,s \in [0,T]} \lVert P_k(\alpha_{k} + s) -
 P_k(s) \rVert_2 \leq \epsilon.
 \end{align}
What remains is to show is that $\{\kappa_k\}_{k \geq 0}$ converges uniformly on compact sets to $0$.
For any $T$, we have that
\begin{align*}
\lim_{k \to \infty} \sup_{t \in [0,T]} \lVert \kappa_k(t)\rVert_2
\leq 
\lim_{k \to \infty} 
 \int_0^{T} \lambda \lVert P_k(\alpha_{k+1} + s) -
 P_k(s) \rVert_2 \mathrm{d}s
\end{align*}
since the integrand is positive. By Theorem~\ref{thm:pqbounded}, we can bound $\lVert P_k(\alpha_{k+1} + s) -
 P_k(s) \rVert_2 \leq 2 \sup_{t \in \mathbb{R}} \lVert 
 P_k(t) \rVert_2  < \infty$. Thus, we can use the Lebesgue dominated convergence theorem and take the limit inside the integral:
\begin{align*}
\lim_{k \to \infty} \sup_{t \in [0,T]} \lVert \kappa_k(t) \rVert_2
\leq 
 \int_0^ T \lim_{k \to \infty}  \lambda \lVert P_k(\alpha_{k+1} + s) -
 P_k(s) \rVert_2 \mathrm{d}s
\end{align*}
By (\ref{eq:Pkequicont}), we can make the integrand arbirarily small by choosing $k$ so large that $\alpha_{k+1} \leq \delta$.
\end{proof}

\subsection{Convergence to a locally asymptotically stable set}

The goal of this section is to show
\thmkushneryin*

We start by showing the following help-lemma:
\begin{lemma}\label{lemma:enters_compact_set_infinitelyoften}
In the same context as Theorem~\ref{thm:kushner_yin}, for each $\delta>0$ there is a subsequence $\{z_{n_k}\}_{k \geq 0}$ of $\{z_k\}_{k \geq 0}$ such that that  $\{z_{n_k}\}_{k \geq 0} \subset N_{\delta}(A)$.
\end{lemma}
\begin{proof}[Proof of Lemma \ref{lemma:enters_compact_set_infinitelyoften}]
Since $\{z_{n_k}\}_{k \geq 0} \subset K$, and $K$ is compact, we can find a further subsequence  
$\{z_{n_k'}\}_{k \geq 0}$ that tends to $z_0 \in K$. Let $\{Z_{n_k'}\}_{k \geq 0}$ be the sequence of shifted interpolations associated with $\{z_{n_k'}\}_{k \geq 0}$. This family is equicontinuous in the extended sense by Lemma~\ref{lemma:PkQk_equicontinuous}, and thus it has a subsequence $\{Z_{n_k^{''}}\}_{k \geq 0}$ converging to a function $z(\cdot)$ which is a solution to (\ref{eq:nearly_Hamiltonian_system}) by Lemma~\ref{lemma:PkQk_asymptotic_solutions}, and satisfies $z(0)=z_0$. Since $z_0$ is in the domain of attraction of $A$, it holds that
\begin{align*}
\lim_{t \to \infty} \inf_{a \in A} \lVert z(t) - a\rVert_{\ell^2\left(\R^{2d}\right)} = 0.
\end{align*}
Choose $T_{\frac{\delta}{2}}$ so large that 
\begin{align*}
\inf_{a \in A} \lVert z(t) - a\rVert_{\ell^2\left(\R^{2d}\right)} < \frac{\delta}{2}
\end{align*}
for all $t \geq T_{\frac{\delta}{2}}$.
Then, we have
\begin{align*}
\inf_{a \in A} \lVert Z_{n_k''}(t) - a\rVert_{\ell^2\left(\R^{2d}\right)} 
&\leq
                                                                            \lVert Z_{n_k''}(t) - z(t)\rVert_{\ell^2\left(\R^{2d}\right)}
+ \inf_{a \in A} \lVert z(t) - a\rVert_{\ell^2\left(\R^{2d}\right)}\\
&< \lVert Z_{n_k''}(t) - z(t)\rVert_{\ell^2\left(\R^{2d}\right)} 
+ \frac{\delta}{2}.
\end{align*}
Since $\{Z_{n_k''}\}$ converges uniformly on compact sets to $z$, we can choose $N_{\frac{\delta}{2}}$ so large that
for any $n_k' \geq N_{\frac{\delta}{2}}$ we have that $\sup_{s \in [0,t]} \lVert Z_{n_k''}(s) - z(s)\rVert_{\ell^2\left(\R^{2d}\right)}  < \frac{\delta}{2}$.
Hence, for $n_k'' > N_{\frac{\delta}{2}}$ we have
\begin{align*}
\inf_{a \in A} \lVert Z_{n_k''}(t) - a\rVert_{\ell^2\left(\R^{2d}\right)} <
\delta,
\end{align*}
which yields the statement of the lemma.
\end{proof}

With the help of Lemma \ref{lemma:enters_compact_set_infinitelyoften}, we now show Theorem \ref{thm:kushner_yin}. The proof is inspired by the proof strategy in~\cite{FortPages.1996}.

\begin{proof}[Proof of Theorem \ref{thm:kushner_yin}]

Let $\epsilon>0$ and let $\delta >0$ be as in Definition \ref{def:locally_asymptotically_stable}.
According to Lemma \ref{lemma:enters_compact_set_infinitelyoften}, there is a subsequence $\{z_{r_k}\}_{k \geq 0}$ of $\{z_k\}_{k \geq 0}$ such that
$\{z_{r_k}\}_{k \geq 0} \subset N_{\delta / 2}(A)$.

We now show by contradiction that $\{z_k\}_{k \geq 0}$ cannot escape $N_{\epsilon}(A)$ infinitely often.
Suppose that there is a subsequence 
$\{z_{s_k}\}_{k \geq 0} \subset N_{\epsilon}(A)^c$.

Define $\ell_0 = \min \{j: z_j \in N_{\delta / 2}(A) \} $ and recursively for $k = 1, 2, \ldots$,
\begin{align*}
n_k &= \min \{j : j \geq \ell_{k-1} \text{ and }  z_j \in N_{\epsilon}(A)^c \} ,\\
m_k &= \max \{j : j\leq n_k \text{ and } z_j \in N_{\delta / 2}(A) \},\\
\ell_k &= \min \{j : j\geq n_k \text{ and } z_j \in N_{\delta / 2}(A) \}.
\end{align*}
Then there is no index $j \in \{m_k+1, \dots, n_k\}$ such that
$z_j \in N_{\delta / 2}(A)$; i.e.\ $m_k$ is the last index for which $z_j$ visits $N_{\delta / 2}(A)$ before going to $ N_{\epsilon}(A)^c$.

Consider the associated sequence of functions $\{Z_{m_k}\}_{k \geq 0}$. This satisfies
\begin{align*}
  Z_{m_k}(t) &= z_{m_k} \in N_{\delta / 2}(A) , & 0 &\leq t < \alpha_{m_k}, \\
  Z_{m_k}(t) &= z_{n_k} \in N_{\epsilon}(A)^c, & t_{n_k} - t_{m_k} &\le t < t_{n_k} - t_{m_k} + \alpha_{n_k}.
\end{align*}
In between these two time intervals, $Z_{m_k}$ attains the values $z_{m_k + 1}$, $z_{m_k + 2}$, $\ldots$, $z_{n_k-1}$. We can therefore guarantee that
\begin{align}\label{fact1}
Z_{m_k}(t) \in  N_{\delta / 2}(A)^c \text{ for } t \in [\alpha_{m_k},t_{n_k} - t_{m_k}].
\end{align}

Let $\{Z_{m_k'}\}$ be a subsequence of $\{Z_{m_k}\}$ that converges uniformly on compact sets to a function $z$.
First, assume that $\limsup_{k \to \infty} t_{n_k'} - t_{m_k'} = \infty$. Then we can extract a subsequence (which we continue to denote $\{t_{n_k}\}$) for which $\lim_{k \to \infty} t_{n_k'} - t_{m_k'} = \infty$. 
Under this assumption, it holds that 
\begin{align*}
z(t) \in N_{\delta / 2}(A)^c \text{ for }t >0.
\end{align*}
If this was not the case there would be a $t' >0$ such that $z(t') \in N_{\delta / 2}(A)$. 
By the openness of $N_{\delta / 2}(A)$ we can choose $\eta>0$ such that
$B(z(t'),\eta) \subset N_{\delta / 2}(A)$. There is a $K_{\eta}$ such that $k \geq K_{\eta}$ implies that
\begin{align*}
\lVert Z_{m_k'}(t') - z(t')\rVert < \eta,
\end{align*}
i.e.\ $ Z_{m_k'}(t') \in B(z(t'),\eta) \subset N_{\delta / 2}(A)$.
However, since $t_{n_k'} - t_{m_k'} \to \infty$ and $\alpha_{m_k'} \to 0$, it holds that $t' \in [\alpha_{m_k'},t_{n_k'} - t_{m_k'}]$ for large enough $k$. This contradicts (\ref{fact1}), so that indeed $z(t) \in N_{\delta / 2}(A)^c$ for $t >0$. By Theorem \ref{thm:discontinuos_arzela_acoli}, $z$ is continuous and since $Z_{m_k}(0) \in N_{\delta / 2}(A)$ we must thus have $z(0) \in \partial N_{\delta / 2}(A)$.
However, the fact that $z(t) \in N_{\delta/2}(A)^c$ for $t\geq 0$ contradicts the asymptotic stability of $A$, since this path which starts in $\partial N_{\delta / 2}(A) \subset N_{\delta}(A)$ does not approach $A$.
This is a contradiction towards our assumption that $t_{n_k'} - t_{m_k'} \to \infty$, and we can thus define $\tilde{T} = \sup_k t_{n_k'} - t_{m_k'} < \infty$.
Then $[0, \tilde{T}]$ is a compact interval such that $\{ t_{n_k'} - t_{m_k'}\}_{k\geq 0} \subset [0, \tilde{T}]$. Hence there is a subsequence 
 $\{ t_{n_k^{''}} - t_{m_k^{''}}\}_{k\geq 0} \subset  \{ t_{n_k'} - t_{m_k'}\}_{k\geq 0}$ that converges to some $T \in [0, \widetilde{T}]$.

The corresponding sequence of functions $\{ Z_{m_k^{''}}\} $ is a subsequence of $\{Z_{m_k'}\}$ and thus it must also converge uniformly on compact sets to the same function $z$. 
From the uniform convergence it also follows that
\begin{align*}
z_{n_k^{''}} = Z_{m_k^{''}}(t_{n_k^{''}}- t_{m_k^{''}}) \to z(T). 
\end{align*}
Since each $z_{n_k^{''}} \in N_{\epsilon}(A)^c$ we must have $z(T) \in N_{\epsilon}(A)^c$. 
However, this contradicts the Lyapunov stability of $A$ and therefore also our original assumption that there exists a subsequence $\{z_{s_k}\}_{k \geq 0} \subset N_{\epsilon}(A)^c$. This concludes the proof. 
\end{proof}

\subsection{Convergence to a stationary point}

We will now apply  Theorem \ref{thm:kushner_yin} and show that $\{z_k\}_{k \geq 0}$ converges to the set $\{z: H(z) \leq \liminf_k H(z_k)\}$. First, we need to show that it is locally asymptotically stable:
\lemmalocallyasymptoticallystable*
\begin{proof}
We need to show that for all $\epsilon > 0$ we can choose $\delta>0$ so that if $z_0 \in N_{\delta}(A)$ then $z(t)$ stays in $N_{\epsilon}(A)$ and that $\lim_{t \to \infty} z(t) \in A$.

By Lemma \ref{lem:deltanbhofsublevelsets2}, there exists a $\eta >0$ such that
$\{z: H(z) \leq c + \eta\} \subset N_{\epsilon}(A)$. 
Now $z(t)$ will stay in $\{z:H(z) \leq c + \eta\}$ since $z(t)$ decreases along the paths of $H$. 
However, it might not converge to $A$ since there may exist stationary points $z_*$ such that
\begin{align*}
c < H(z_*) \leq c+ \eta,
\end{align*}
and if we reach one of these we will get stuck there instead of reaching $A$. Define
\begin{align*}
c_* = \inf \{H(z_*): c < H(z_*) \leq c + \eta , \nabla H(z_*)=0\}.
\end{align*}
It holds that $c_* > c$, i.e.\ we cannot find stationary points for which $H(z_*)$ is arbitrarily close to $c_*$. We can see this by letting $\Lambda = \{x: \nabla H(x) = 0\}$ and $K= [c , c+ \eta]$. Then by Assumption \ref{ass:basic}.\ref{ass:critical_points_locallyfinite}, there exist numbers $\{y_i\}_{i=1}^n$ , such that $y_1 < \dots < y_n$ and
\begin{align*}
\{ H(z): c \leq H(z) \leq c+ \eta : z \in \Lambda \}
= H(\Lambda) \cap K = \{y_1, \dots, y_n \}.
\end{align*}
If $y_1 = c$, we have that
\begin{align*}
y_2 = \min H(\Lambda) \cap K = \min \{H(z): c < H(z) \leq c + \eta, \ z \in \Lambda\} = c_*
\end{align*}
and thus $c_* >c$. Similarly, if $y_1 > c$, we also get that $c_* = y_1 > c$. Thus indeed it always holds that $c_* > c$, and we can take $\mu >0$ so small that $c_*-\mu >c$.
By Lemma \ref{lem:deltanbhofsublevelsets1} there exists some $\delta >0$ such that
\begin{align*}
N_{\delta}(A) \subset \{H(z) < c_* - \mu\}
\end{align*}
Since $H$ is decreasing along the paths of $z(t)$, any solution starting in $N_{\delta}(A)$ will stay inside $\{z:H(z) \leq c_* - \mu\}$ (and thus $N_{\epsilon}(A)$). 
By La Salle's invariance principle, any path starting in the compact set $\mathcal{M} =\{z: H(z) \leq c_* - \mu\}$ tends to $\{z \in \mathcal{M}: \nabla H(z) =0\}$.
All points $z_* \in \{z \in \mathcal{M}: \nabla F(z) =0\}$ satisfy $H(z_*) \leq c$, by the choice of $c_*$ and $\mu >0$. 
Thus, $z(t) \to \{z: H(z) \leq c\}$ whenever $z(0) \in N_{\delta}(A)$.

We have for the given $\epsilon>0$ found a $\delta>0$ such that any path in $N_{\delta}(A)$ never leaves $N_{\epsilon}(A)$ and tends to $A$ as $t \to \infty$.
\end{proof}

We are now ready to prove our main result, Theorem~\ref{thm:main}:
\thmmain*

\begin{proof}
Let $c = \liminf_k H(z_k)$.
We start by showing that $\lim_{k \to \infty} H(z_k) = c$.
By Lemma \ref{lemma:locally_asymptotically_stable}, the set $A=\{z: H(z) \leq c\}$ is a locally asymptotically stable set, and by Lemma \ref{lemma:infcio}, we can find a compact set $K$ in the domain of attraction of $A$ that $\{z_k\}_{ k \geq 0}$ enters infinitely often:
In particular, we can take $K = \mathcal{M}$, where $\mathcal{M}$ is as in the proof of Lemma \ref{lemma:locally_asymptotically_stable}; $\mathcal{M}$ is in the domain of attraction of $A$, and by Lemma \ref{lemma:infcio}, $\{z_k\}_{ k \geq 0}$ visits $\mathcal{M}$ infinitely often.
Theorem \ref{thm:kushner_yin} then implies that 
 $z_k \to \{z: H(z) \leq c\}$. 
Suppose that $\lim_k H(z_k)\neq c$.
The negation of the statement is
\begin{align*}
\exists \epsilon >0 : \forall n \, \exists n_k\geq n, \bigl(H(z_{n_k}) \leq c - \epsilon\bigr) \lor 
\bigl(H(z_{n_k}) \geq c + \epsilon\bigr).
\end{align*}
This is false, since in the case that there exists a subsequence $\{z_{n_k}\}$ such that $H(z_{n_k}) \geq c+ \epsilon$,  $\{z_k\}$ would not converge to $\{z: H(z) \leq c\}$.
Further, if there exists a subsequence that satisfies $H(z_{n_k}) \leq c - \epsilon$ we would have
$\liminf_k H(z_k) \leq c - \epsilon < c$ which is also a contradiction by the choice of $c$.

We now know that $\lim_{k\to\infty}H(z_k) =c$, but we have yet to verify that $\{z_k\}_{k \geq0}$ converges to the set of stationary points.
Suppose that this is not the case. For brevity let $\Lambda = \{ z: \nabla H(x) = 0\}$. Then there exists an $\epsilon_0 >0$ and a subsequence $\{z_{n_k}\}_{k \geq 0}$ 
such that
\begin{align}\label{ineq:znknotstationary}
\inf_{x \in \Lambda} \lVert z_{n_k} - x \rVert \geq \epsilon_0.
\end{align}
From the previous paragraph, it holds that 
\begin{align*}
\lim_{n_k \to \infty} H(z_{n_k}) = c.
\end{align*}
By Theorem \ref{thm:pqbounded}, the sequence $\{z_{n_k}\}_{k\geq 0}$ is bounded. By Lemma \ref{lem1} we can thus find a further subsequence (still denoted by $\{z_{n_k}\}_{k \geq 0}$) and a point
$ \tilde{z}_0$ such that $\lim_{n_k \to \infty} z_{n_k} =\tilde{z}_0$ and $H(\tilde{z}_0) = c$.
The sequence of interpolations $\{Z_{n_k}(\cdot)\}$ 
associated with $\{ z_{n_k} \}$, has a subsequence $\{Z_{n_k'}(\cdot)\}$ that converges to a solution $\tilde{z}(\cdot)$, such that  $\tilde{z}(0)= \tilde{z}_0$.
By (\ref{ineq:znknotstationary}) it holds that $\tilde{z}_0 \not\in \{z:\nabla H(z)=0\}$.
As $H$ is decreasing along the paths of $\tilde{z}(\cdot)$, we have for $t' >0$ that $c = H(\tilde{z}_0) = H(\tilde{z}(0))> H(\tilde{z}(t'))$.
However, $\tilde{z}(\cdot)$ is taking values in $L\left( \{z_k\}\right)$, the set of limit points of $\{z_k\}$, compare Proposition 1.b) in \cite{FortPages.1996}. Thus, there is some subsequence $\{z_{m_k}\}$ that converges to 
$H(\tilde{z}(t'))$.
But since $c = H(\tilde{z}_0) > H(\tilde{z}(t'))$ and $\{z_{m_k}\}$ converges to 
$H(\tilde{z}(t'))$. This is a contradiction, by the choice of $c$.
It follows that the set of limit points $L\left(\{z_k\}_{k \geq 0} \right)$ of $\{z_k\}_{k \geq 0}$ is contained in $\{z: \nabla H(z) =0\}$.
Since $z_{k+1} -z_k \to 0$, the limit set  $L\left(\{z_k\}_{k \geq 0} \right)$ is connected \citep{asic_adamovic_1970}. By Assumption \ref{ass:isolated_equilibria}, this implies that $\{z_k\}_{k \geq 0}$ converges to a single stationary point.
\end{proof}

At last, we prove Corollary \ref{cor:convinexp}:
\corconvinexp*                   
\begin{proof}
By Theorem~\ref{thm:main}  $\{q_k\}_{k \geq 0}$ converges almost surely to a stationary point $q_*$. Hence,
\begin{align*}
\lim_{k \to \infty} \lVert \nabla F(q_k) \rVert_2 =0, \text{a.s.},
\end{align*}
compare Lemma~2.3 in \cite{van2000asymptotic}.
Under Assumption~\ref{ass:setting_one}, we have from Lemma \ref{lemma:smoothbound} that
\begin{align*}
\lVert \nabla F(q_k)\rVert_2^2 \leq 2L  \left( F(q_k) - F_* \right),
\end{align*}
and by Theorem~\ref{thm:pqbounded}, 
\begin{align*}
\sup_k \E \left[ \lVert \nabla F(q_k)\rVert_2^2 \right] < \infty.
\end{align*}
By Lemma 3 in Chapter 2.6 of \cite{shiryaev_2016} with $G(t) = t^2$ we obtain that the sequence $\{\| \nabla F(q_k) \|\}_{k \geq 0}$ is uniformly integrable. It follows from Theorem 5 in Chapter 2.6 of \cite{shiryaev_2016} that
\begin{align*}
\lim_{k \to \infty} \E \left[ \lVert \nabla F(q_k) \rVert_2 \right] =0.
\end{align*}
Under Assumption~\ref{ass:setting_two} or \ref{ass:setting_three}, we instead get from Lemma \ref{lemma:smoothbound} and Theorem \ref{thm:pqbounded} that
\begin{align*}
\sup_k \E \left [\lVert \nabla F(q_k)\rVert_2\right] < \infty.
\end{align*}
It follows that
\begin{align*}
\lim_{k \to \infty}  \E \left [ \lVert \nabla F(q_k)\rVert_2^{\frac{1}{2}} \right]= 0.
\end{align*}
\end{proof}

\section{Auxiliary results}\label{appendix:auxiliary}
Several of the results in this section are relatively standard, but we include them for the convenience of the reader.
\begin{lemma}\label{lem1}
Let $\{x_k\}_{k\geq 0}$ be a sequence in  $\mathbb{R}^d$.
Suppose that $\sup_k \lVert x_k \rVert  <\infty$ and that $f(x_k) \to y$, where $f:\mathbb{R}^d \to \mathbb{R}$ is continuous. Then there is a subsequence $\{x_{n_k}\}_{k\geq 0} \subset \{x_k\}_{k\geq 0}$ that converges to a number $x$ such that
$f(x) = y$
\end{lemma}
\begin{proof}
Since  $\sup_k \lVert x_k \rVert  <\infty$ there is a compact set $K$ such that $ \{x_k\} \subset K$. By compactness, there is a subsequence $\{x_{n_k}\}$, that converges to an element $x$. The sequence  $\{f(x_{n_k})\}$ is a subsequence of $ \{f(x_k)\} $, and must converge to the same limit $y$. However, by continuity of $f$, we have that $\lim_{k \to \infty} f(x_{n_k}) = f ( \lim_{k \to \infty}  x_{n_k}) = f(x)$. Thus, $f(x) = y$.
\end{proof}
The next two Lemmas, Lemma \ref{lem:deltanbhofsublevelsets1} and \ref{lem:deltanbhofsublevelsets1}, are helpful in showing that the sublevel sets the Hamiltonian are locally asymptotically stable:
\begin{lemma}\label{lem:deltanbhofsublevelsets1}
Suppose that $f:\mathbb{R}^d \to \mathbb{R}$ is continuous and coercive. Let $A = \{x: f(x) \leq c\}$, where $c$ is such that $A \neq \emptyset$. Then, for every $\eta >0$ there is $\delta>0$ such that $N_{\delta}(A) \subset \{x: f < c+ \eta\}$.
\end{lemma}
\begin{proof}
We first note that since $f$ is coercive, $A$ is compact. Let $\eta>0$ be given. By continuity of $f$, there is $\delta>0$ such that 
\begin{align*}
|x-y|< \delta \ \implies |f(x) - f(y)| < \eta.
\end{align*}
For such $\delta$, we consider
\begin{align*}
N_{\delta}(A) = \{x: \inf_{a \in A} \lVert x - a \rVert < \delta\}.
\end{align*}
Take $x_0 \in N_{\delta}(A)$. Then $ \inf_{a \in A} \lVert x_0 - a \rVert < \delta$ and by the definition of the infimum, there exists for each $n$ and element $a_n \in A$ such that
\begin{align*}
\lVert x_0 - a_n \rVert  <  \inf_{a \in A} \lVert x_0 - a \rVert  + \frac{1}{n}.
\end{align*}
Then $\{a_n\}\subset A$ and by compactness there is a subsequence $\{a_{n_k}\}$ that converges to an element $a_* \in A$. Since
\begin{align*}
\lVert x_0 - a_{n_k} \rVert  <  \inf_{a \in A} \lVert x_0 - a \rVert  + \frac{1}{n_k},
\end{align*}
it holds that
\begin{align*}
\lVert x_0 - a_* \rVert  \leq  \inf_{a \in A} \lVert x_0 - a \rVert  < \delta.
\end{align*}
Since $f$ is continuous, we have that 
\begin{align*}
f(x_0) < f(a_*) + \eta \leq c + \eta
\end{align*}
i.e. $x_0 \in \{x: f(x) \leq c+ \eta\}$. Thus $N_{\delta}(A) \subset \{x: f(x) \leq c+ \eta\}$.
\end{proof}

\begin{lemma}\label{lem:deltanbhofsublevelsets2}
Let $A = \{x:f(x) \leq c\}$ (where $c$ is such that $A \neq \emptyset$). Then for every $\epsilon >0$ there is $\eta>0$ such that $\{x: f(x) \leq c + \eta\} \subset N_{\epsilon}(A)$.
\end{lemma}

\begin{proof}
If this was not the case, then there exists some $\epsilon>0$ and for every $n$ we can find $x_n$ that satisfies
 \begin{align*}
 x_n \in \biggl\{x: f(x)\leq c +\frac{1}{n} \biggr\} \cap N_{\epsilon}(A)^c
 \subset \{x: f(x)\leq c +1 \} \cap N_{\epsilon}(A)^c.
 \end{align*}
 The latter is compact since $ N_{\epsilon}(A)^c$ is closed and $\{x:f(x)\leq c +1 \}$ is compact.
 Thus, $\{x_n\}$ has a subsequence $\{x_{n_k}\}$ that converges to $x_* \in \{x: f(x)\leq c +1 \} \cap N_{\epsilon}(A)^c \subset  N_{\epsilon}(A)^c  \subset A^c$.
 However, each $x_{n_k} \in \{x: f(x)\leq c +\frac{1}{n_k} \}$ and thus by continuity it holds that
 $f(x_*) \leq c$ which is a contradiction.
\end{proof}
We will use the next lemma to show that under the given assumptions, the sublevel set $\{z: H(z) \leq \liminf_{k \to \infty} H(z_k)\}$ is non-empty:
\begin{lemma}\label{lem:liminfnonempty}
Let $f:\mathbb{R}^d \to \mathbb{R}$ be a continuous function that is bounded below by $f_* = \inf_{x \in \mathbb{R}^d} f(x)$.
Let  $\{x_k\}_{k \geq 0}$ be a sequence in $\mathbb{R}^d$ with $\sup_{k} \lVert x_k \rVert < \infty$. Put $c = \liminf_{k} f(x_k) $. Then $\{x: f(x) \leq c\} \neq \emptyset$.
\end{lemma}
\begin{proof}
By assumption  $\{x_k\}_{k \geq 0}$ is contained in a compact set $K$. By continuity, it holds that $C=\sup_k f(x_k) < \infty$ and 
hence the sequence $\{f(x_k)\}_{k \geq 0}$ is contained in the (compact) interval $[f_*,C]$.
It follows that $c \in [f_*,C]$. By a standard result in real analysis, we can (since  $\{f(x_k)\}_{k \geq 0}$ is bounded) find a subsequence $\{f(x_{n_k})\}$ that converges to $c$. By Lemma \ref{lem1}, there exists a further subsequence $\{x_{n_k'}\}$ that converges to an element $x$ such that $f(x) = c$.
Hence $x \in \{x: f(x) \leq c\}$ and $\{x: f(x) \leq c\} \neq \emptyset$.
\end{proof}
\begin{lemma}\label{lemma:infcio}
Let $f$ be a function which is bounded below. and let $c = \lim \inf_k f(x_k)$. Then for every $\delta >0$, the sequence $\{x_k\}$ is in the set $A_{\delta}=\{x: f(x) \leq c + \delta\}$ infinitely often.
\end{lemma}
\begin{proof}
The negation of the statement is
\begin{align*}
\lnot 
\bigl(
\forall \delta >0, \forall n, \exists n_k, (n_k \geq n) \land (z_{n_k} \in A_{\delta})
\bigr)
\end{align*}
which can be rewritten as
\begin{align*}
\exists \delta_0 >0, \exists k_0, \forall k \geq k_0,\  x_{k} \not \in A_{\delta}.
\end{align*}
This means that for all $k \geq k_0$,
\begin{align*}
f(x_k)> c + \delta_0.
\end{align*}
Taking the infimum, we see that  
\begin{align*}
\inf_{k \geq k_0} f(x_k) \geq c + \delta_0.
\end{align*}
Since $\inf_{k \geq k_0} f(x_k)$ is increasing, we have for $k \geq k_0$ that
\begin{align*}
\inf_{m \geq k} f(x_m)  \geq \inf_{m \geq k_0} f(x_m) \geq c + \delta_0.
\end{align*}
Taking the supremum over $k$, we see that we must have
\begin{align*}
\liminf_k f(x_k) = 
\sup_{k \geq 0} \inf_{m \geq k} f(x_m)  \geq c + \delta_0,
\end{align*}
i.e. $\liminf_{k \to \infty} f(x_k) \geq c + \delta_0$, which is a contradiction.
\end{proof}

\begin{lemma}\label{lemma:smoothbound}
  Let $F$ be bounded from below by $F_*$. If $F$ is $(L_0,L_1)-$smooth, it holds that
  \begin{align*}
    \lVert \nabla F(q) \rVert \leq
    2 L_1(F(q) - F_*)  +    \frac{ L_0}{L_1}.
  \end{align*} 
  If $F$ is instead $L$-smooth with Lipschitz constant $L$, it holds that
  \begin{align*}
    \lVert \nabla F(q) \rVert_2^2  \leq 2 L ( F(q) - F_*).
  \end{align*}
\end{lemma}

\begin{proof}
  Consider first the $(L_0,L_1)-$smooth case.
Put
\begin{align}\label{eq:qplus}
q_+ = q - \frac{1}{L_1 \lVert \nabla F(q) \rVert} \nabla F(q),
\end{align}

Then 
\begin{align*}
\lVert q_+ - q \rVert  = \frac{1}{L_1} 
\end{align*}
Thus, the conditions for (21) in \cite{zhang_2020} are satisfied, and it holds that
\begin{align*}
F(q_+) - F(q) \leq \langle \nabla F(q) , q_+ - q\rangle + \frac{ L_0 + L_1\lVert \nabla F(q) \rVert}{2} \lVert q_+ - q \rVert^2.
\end{align*}
Inserting (\ref{eq:qplus}) into the previous expression we see that
\begin{align*}
F(q_+) - F(q) \leq - \frac{1}{L_1 } \lVert \nabla F(q) \rVert + \frac{L_0 + L_1 \lVert \nabla F(q) \rVert}{2} \frac{1}{L_1^2},
\end{align*}
Rearranging the terms, we find that
\begin{align*}
F(q_+) - F(q) \leq - \frac{1}{2L_1 } \lVert \nabla F(q) \rVert + \frac{ L_0}{2} \frac{1}{L_1^2}
\end{align*}
One more rearrangement yields 
\begin{align*}
\lVert \nabla F(q) \rVert \leq
2L_1(F(q) - F(q_+))  +    \frac{L_0}{L_1}.
\end{align*}
Since  $F(q_+) \geq F_*$ we obtain the statement of the first part of the Lemma. The proof of the second part is very similar but simpler, and therefore omitted.
\end{proof}

\begin{lemma}\label{lemma:asboundstochasticgradient}
Let $F$ satisfy Assumption \ref{ass:setting_three}.\ref{ass:empiricalriskminimization} and $f(\cdot,\xi)$ satisfy Assumption \ref{ass:setting_three}.\ref{ass:stochasticfunctionsL0L1smooth}. Then
\begin{align}\label{ineq:asboundstochasticgradient}
\lVert \nabla f(x,\xi) \rVert_2 \leq 2NL_1 \left(F(q) - F_* \right) + \frac{L_0}{L_1},
\end{align}
almost surely,
where $F_* = \frac{1}{N} \sum_{i=1}^N \inf_{q \in \mathbb{R}^d} f_i(x)$.
\end{lemma}
\begin{proof}
We start by showing that there exists a constant $C$ such that
\begin{align}\label{ineq:boundstochasticfunctions}
f(x,\xi) - \inf_{x \in \mathbb{R}^d} f(x,\xi)
\leq C (F(x) - F_*),
\end{align}
almost surely.
First we note that by the properties of $\inf$ it holds that
\begin{align*}
\inf_{x \in \mathbb{R}^d} f(x,\xi) = 
\inf_{x \in \mathbb{R}^d} \biggl( \frac{1}{|B_{\xi}|} \sum_{i \in B_{\xi}} f_i(x)
\biggr)
&= 
 \frac{1}{|B_{\xi}|} \cdot 
\inf_{x \in \mathbb{R}^d} \biggl( \sum_{i \in B_{\xi}} f_i(x) \biggr) \\
&\geq 
 \frac{1}{|B_{\xi}|} \cdot 
\sum_{i \in B_{\xi}} \inf_{x \in \mathbb{R}^d}  f_i(x).
\end{align*}
Hence 
\begin{align*}
\begin{split}
f(x,\xi) - \inf_{x \in \mathbb{R}^d} f(x,\xi)
&=
 \frac{1}{|B_{\xi}|} \cdot 
 \sum_{i \in B_{\xi}} f_i(x) 
 - \inf_{x \in \mathbb{R}^d} f(x,\xi)
 \\ 
 & \leq
  \frac{1}{|B_{\xi}|} \cdot 
 \sum_{i \in B_{\xi}} f_i(x) 
 -
  \frac{1}{|B_{\xi}|} \cdot 
\sum_{i \in B_{\xi}} \inf_{x \in \mathbb{R}^d}  f_i(x)
\end{split}
\end{align*}
Since $f_i(x) - \inf_{x \in \mathbb{R}^d}  f_i(x)) \geq 0$, the previous expression can be bounded by
\begin{align*}
  \frac{1}{|B_{\xi}|} \cdot 
 \sum_{i=1}^N f_i(x) 
 - \inf_{x \in \mathbb{R}^d}  f_i(x)
=
\frac{N}{|B_{\xi}|} \left( F(x) - F_*\right).
\end{align*}
As $1 \le |B_{\xi}| \le N$,
(\ref{ineq:boundstochasticfunctions}) holds.
Since $f(\cdot,\xi)$ is $(L_0,L_1)$-smooth,
it further holds that
\begin{align*}
\lVert \nabla f(x,\xi) \rVert_2
\leq
2 L_1 \left(
f(x,\xi) - \inf_{x \in \mathbb{R}^d} f(x,\xi)
\right) + \frac{L_0}{L_1}.
\end{align*}
Combining the previous expression with (\ref{ineq:boundstochasticfunctions}), we obtain (\ref{ineq:asboundstochasticgradient}).
\end{proof}

\begin{remark}
Note that for fixed, \emph{deterministic} $x$, (\ref{ineq:asboundstochasticgradient}) implies that the norm $\lVert \nabla f(x,\xi) - \nabla F(x) \rVert_2$ is bounded almost surely. This means that around a stationary point $q_*$ or for the initial iterate $q_0$ (which in this paper is assumed to be deterministic), the norm of the noise is not heavy-tailed. When $x= q_k$ is a random variable this is no longer the case. This is in line with e.g \citep{gurbuzbalaban21_2021}, in which it is reported that the noise is not heavy-tailed initially.
\end{remark}

\begin{lemma}\label{lemma:2ndmomentsetting3}
Under Assumptions \ref{ass:setting_three}. \ref{ass:empiricalriskminimization}-\ref{ass:stochasticfunctionsL0L1smooth} it holds that
\begin{align}\label{eq:2ndmomentsetting3}
\mathbb{E} \left[ \lVert \nabla F(q_k) - \nabla f(q_k,\xi_k) \rVert_2^2 \right] < + \infty,
\end{align}
where $\{q_k\}_{k \geq 0}$ is determined by \eqref{eq:stochastic_Hamiltonian_descent}. 
\end{lemma}
\begin{proof}
By Lemma \ref{lemma:asboundstochasticgradient} and the triangle inequality it holds that
\begin{align*}
 \lVert \nabla F(q_k) - \nabla f(q_k,\xi_k) \rVert_2
 &\leq
  2NL_1 \left(F(q_k) - F_* \right) + \frac{L_0}{L_1}
  + \lVert \nabla F(q_k) \rVert_2
  \\
  & \leq 
   2L_1(N+1) \left(F(q_k) - F_* \right) + \frac{2L_0}{L_1},
\end{align*}
where we have used Lemma \ref{lemma:smoothbound} to bound the $\lVert \nabla F(q_k)\rVert_2-$term in the second inequality.
We can use this to bound one of the powers in the expectation of \eqref{eq:2ndmomentsetting3}:
\begin{align*}
\mathbb{E}_{\xi_k} \left[ \lVert \nabla F(q_k) - \nabla f(q_k,\xi_k) \rVert_2^2 \right]
&\leq
\mathbb{E}_{\xi_k} \left[ \lVert \nabla F(q_k) - \nabla f(q_k,\xi_k) \rVert_2  \right]  \\
&\cdot \left(  2L_1(N+1) \left(F(q_k) - F_* \right) + \frac{2L_0}{L_1} \right)\\
& \leq \sigma \cdot \left( 2L_1(N+1) \left(F(q_k) - F_* \right) + \frac{2L_0}{L_1} \right),
\end{align*}
where we have used Assumption \ref{ass:setting_three}.\ref{ass:heavytailednoise}, along with the fact that $q_k$ is $\mathcal{F}_k$-measurable (hence we may take the $F(q_k)$-term outside of the conditional expectation) while $\xi_k$ is independent of $\mathcal{F}_k$. Taking the total expectation of this, we find that
\begin{align*}
\mathbb{E} \left[ \lVert \nabla F(q_k) - \nabla f(q_k,\xi_k) \rVert_2^2 \right]
\leq \sigma \cdot \left( 2L_1(N+1) \mathbb{E} \left[F(q_k) - F_* \right] + \frac{2L_0}{L_1} \right).
\end{align*}
By Lemma \ref{thm:pqbounded} we have that $\sup_{k \geq 0}  \mathbb{E} \left[F(q_k)  \right] < \infty$, and thus \eqref{eq:2ndmomentsetting3} follows.  
\end{proof}

\section{Existence and uniqueness of solutions}\label{appendix:existence}
When Assumption
\ref{ass:setting_one}.\ref{ass:Lipschitzcontinuous_gradient} is satisfied, the solutions to \eqref{eq:nearly_Hamiltonian_system} are clearly unique and exists for all $t \geq 0$. Under Assumption \ref{ass:setting_two}.\ref{ass:L0L1smoothness}, this is not so obvious anymore. Essentially this follows from Assumption \ref{ass:basic}.\ref{ass:coercive} and \ref{ass:kinetic_energy_function}.\ref{ass:coercive_kinetic}, and in this section we elaborate on why this is the case.
For brevity, let $z= (p,q)$ and let 
\begin{align}\label{eq:nearly_Hamiltonian_system_short}
\dot{z} = f(z)
\end{align}
denote the system \eqref{eq:nearly_Hamiltonian_system}.
Put $V(z) = H(p,q) - F_* - \varphi_*$, where
$F_*$ is defined by \ref{ass:basic}.\ref{ass:proper} and $\varphi_*$ by \ref{ass:kinetic_energy_function}.\ref{ass:proper_kinetic}. Then $V$ is a Lyapunov function for \eqref{eq:nearly_Hamiltonian_system_short} and by construction, $V(z) \geq 0$.
We also note that by Assumption \ref{ass:basic}.\ref{ass:coercive} and \ref{ass:kinetic_energy_function}.\ref{ass:coercive_kinetic}
\begin{align*}
\lim_{\lVert z \rVert \to \infty} V(z) = \infty.
\end{align*}

By Theorem 10.1 in \cite{Yoshizawa_1966} (see also Theorem 1 in \cite{Yoshizawa_1959}), it follows that the solutions to \eqref{eq:nearly_Hamiltonian_system_short} are \emph{equi-bounded}, in the sense that for a given $B>0$, there exists a $C$ such that all solutions with initial conditions $\lVert z_0\rVert \leq B$ satisfies $\lVert z(t) \rVert \leq C$. Thus the solutions to \eqref{eq:nearly_Hamiltonian_system_short} are defined for all $t \geq 0$, compare \S 23 in  \cite{LaSalle_Lefschetz_1961}.

We will now make use of the following theorem from \cite{Yoshizawa_1966}:
\begin{theorem}\label{yoshikawathm}[Thm. 1.4 in \cite{Yoshizawa_1966}]
Suppose that the right hand side of \eqref{eq:nearly_Hamiltonian_system_short} is continuous on $D = \{(t,x): t \in [0,\infty), \lVert x\rVert \leq b \}$. In order that every solution of \eqref{eq:nearly_Hamiltonian_system_short} that starts at a point $(t_0,x_0) \in D$ is unique to the right, it is necessary and sufficient that there exists a neighbourhood $U$ of $(t_0,x_0)$ with the following property: Let $W$ be a set of $(t,x,y)$ such that $(t,x) \in U$ and $(t,y) \in U$. Then there exists a Lyapunov function $V$ defined on $W$ which satisfies
\begin{enumerate}[label=\roman*),start=1]
    \item $V(t,x,y) = 0$ if $x=y$, \label{yoshikawathm1}
    \item $V(t,x,y) > 0$ if $x\neq y$,  \label{yoshikawathm2}
    \item $V$ is locally Lipschitz continuous,  \label{yoshikawathm3}
    \item $\frac{\mathrm{d}}{\mathrm{d} t} V(t,x(t),y(t)) \leq 0$.  \label{yoshikawathm4}
\end{enumerate}
\end{theorem}
From the discussion that precedes Theorem \ref{yoshikawathm}, we can consider $D= \{(t,x): t \in [0, \infty), \lVert x \rVert \leq C\}$, since the solutions to \eqref{eq:nearly_Hamiltonian_system_short} are bounded.
Let $x \in \mathbb{R}^n$. Since $F$ is $(L_0,L_1)$-smooth, we can take the neighbourhood $U$ in the previous theorem to be $U = \{y: \lVert x- y \rVert \leq \frac{1}{L_1}\}$. By $(L_0,L_1)$-smoothness, it holds that $\nabla F$ is continuous. This, along with the fact that $U$ is compact, implies that $\sup_{x\in U} \lVert \nabla F(x)\rVert < +\infty$. Thus, $\nabla F$ is Lipschitz continuous on $U$ with Lipschitz constant $L_0 + L_1 \sup_{x \in U}\lVert \nabla F(x) \rVert$.
As $\nabla \varphi$ is also Lipschitz continuous, so is $f$. Let $L$ be the Lipschitz constant of $f$ on $U$. 
Then we can take
\begin{align*}
V(t,x,y) = e^{-2Lt} \lVert x - y \Vert^2,
\end{align*}
which clearly satifies  \ref{yoshikawathm1}-  \ref{yoshikawathm3} of Theorem \ref{yoshikawathm}. By the calculation
\begin{align*}
\frac{\mathrm{d}}{\mathrm{d} t} V(t,x(t),y(t)) & \leq
2 e^{-2Lt} \left(
-L  \lVert x(t) - y(t) \Vert^2
+
 \lVert x(t) - y(t) \Vert
  \lVert f(t) - f(t) \Vert\right)
  \\ & \leq 0,
\end{align*}
 it also follows that it satisfies \ref{yoshikawathm4}.
Hence, we can appeal to Theorem \ref{yoshikawathm} to conclude that the solutions to \eqref{eq:nearly_Hamiltonian_system_short} are unique for $t \geq0$.

\section{The discontinuous Arzel\`a-Ascoli theorem}\label{appendix:arzela_ascoli}
The particular version of the Arzel\`a-Ascoli theorem we make use of is an extension of that in \cite{droniou_eymard_2016} to processes on the entire real line. It is mentioned without proof in \cite{kushner_yin_2003}. For the sake of completeness we include it here. The issue with applying Theorem 6.2 in \cite{droniou_eymard_2016} is that without further knowledge, the subsequences (and the limits we extract) may differ on different compact intervals, since these are different spaces. Here we show that this is not the case. 

Recall that a function $f: \mathbb{R}^+ \to \mathbb{R}^d$ is c{\`a}dl{\`a}g if the left limit $f(t_-) = \lim_{s \to t^-}f(s)$ exists and the right limit
$f(t_+) = \lim_{s \to t^+}f(s)$ exists and equals $f(t)$, compare \cite{billingsley1968}. All processes considered in this paper are c{\`a}dl{\`a}g by virtue of being piecewise constant.
For each $m \in \mathbb{N}$ we let $D_m$ be the space of c{\`a}dl{\`a}g functions on the interval $[0,m]$ endowed with the uniform metric  $\lVert x \rVert_m= \sup_{t \in [0,m]} |x(t)|$, and $D_{\infty}$ be the space of c{\`a}dl{\`a}g functions on $\mathbb{R}_+$ endowed with the topology of compact convergence. Furthermore, we introduce for each $m$ the restriction mapping $\psi_m: D_{\infty} \to D_m$, given by $\psi_m(x)(t) = x(t)$, $t \in [0,m]$.
We have

\begin{lemma}\label{lemma:relativelycompact}
A set $A$ is relatively compact in $D_{\infty}$ if and only if for each $m$, $\psi_m(A)$ is relatively compact in $D_m$. 
\end{lemma}
The proof is very similar to that of Theorem 16.4 in \cite{billingsley1968} and therefore omitted.

\begin{theorem}[Arzel\`a-Ascoli]
Let $\{x_k\}_{k \geq 0}\subset D_{\infty}$ be a sequence of functions that are equicontinuous in the extended sense. Then there exists a subsequence $\{x_{n_k}\}_{k \geq 0} \subset \{x_k\}_{k \geq 0}$ that converges uniformly on compact sets to a an element $x \in C([0, \infty))$.
\end{theorem}
\begin{proof}
Take $A = \{x_k\}_{k \geq 0}$. By Theorem 6.2 in \cite{droniou_eymard_2016}, the restriction $\psi_m(A)$ is relatively compact for each $m \in \mathbb{N}$. By Lemma \ref{lemma:relativelycompact}, $A$ is relatively compact in $D_{\infty}$, and thus there exists a subsequence $\{x_{n_k}\}_{k \geq 0} \subset \{x_k\}_{k \geq 0}$ that converges to some element $x \in D_{\infty}$. It remains to show that $x \in C([0, \infty))$.

Since $x_{n_k}$ converges to $x \in D_{\infty}$ it converges to $x$
on each compact interval $[0,m]$. Furthermore, $\{x_{n_k}\}_{k \geq 0}$ is equicontinous in the extended sense since it is a subsequence of $\{x_k\}_{k \geq 0}$. Hence, on each interval $[0,m]$ it has a further subsequence which converges uniformly to a continuous function. This limit must coincide with the restriction of $x$ to $[0,m]$. Thus $x$ is continuous on each interval $[0,m]$ and so $x \in C([0, \infty))$.
\end{proof}

\end{document}